\theoremstyle{plain}
\newtheorem{theorem}{Theorem}[section]
\newtheorem{corollary}[theorem]{Corollary}
\newtheorem{proposition}[theorem]{Proposition}
\newtheorem{lemma}[theorem]{Lemma}
\theoremstyle{definition}
\newtheorem{definition}[theorem]{Definition}
\newtheorem{remark}[theorem]{Remark}
\newtheorem{example}[theorem]{Example}
\numberwithin{equation}{section}
\begin{document}

\title[Convex polygon coordinates]{The inverse problem of convex polygon coordinates}

\author[Romanowska]{A.B. Romanowska}
\author[Smith]{J.D.H. Smith}
\author[Zamojska-Dzienio]{A. Zamojska-Dzienio}
\address{(A.R., A.Z.) Faculty of Mathematics and Information Science\\
Warsaw University of Technology\\
00-662 Warsaw, Poland}

\address{(A.R., J.S.) Department of Mathematics\\
Iowa State University\\
Ames, Iowa, 50011, USA}

\email{(A.R.) anna.romanowska@pw.edu.pl\phantom{,}}
\email{(J.S.) jdhsmith@iastate.edu\phantom{,}}
\email{(A.Z.) anna.zamojska@pw.edu.pl}

\urladdr{(J.S.) 
\protect
{
\href{https://jdhsmith.math.iastate.edu/}
{https://jdhsmith.math.iastate.edu/}
}
}
\keywords{barycentric coordinates; Wachspress coordinates; Gibbs distribution; softmax; barycentric algebra}
\subjclass[2020]{51M20, 52A01, 52B99}

\begin{abstract}
Each convex combination of extreme points of a compact convex set represents a certain point of the convex set. Barycentric coordinates provide solutions to the inverse problem of expressing an element of a compact convex set as a convex combination of a finite number of extreme points of the set. Various approaches to this problem have arisen, in various contexts. The most general solution, namely the Gibbs coordinates based on entropy maximization, actually work in the broader setting of barycentric algebras, which constitute semilattice-ordered systems of convex sets. These coordinates involve exponential functions. For convex polytopes, Wachspress coordinates offer solutions which only involve rational functions. The current paper is primarily focused on convex polygons in the plane. After summarizing the Gibbs and Wachspress coordinates, we identify where they agree, and provide comparisons between them when they do not. With an example, we also show how Gibbs coordinates of a polygon with rational vertices may be construed as algebraic functions.
\end{abstract}

\thanks{The research of the third author was partially conducted while a Visiting Scholar at Iowa State University. It was supported under a Fulbright Senior Award granted by the Polish-U.S. Fulbright Commission, and additionally by the Kosciuszko Foundation---The American Centre of Polish Culture.}

\maketitle

\tableofcontents

\section{Introduction}

\subsection{Motivation}

Each point of a simplex has a unique expression as a convex combination of vertices of the simplex (\S\ref{SS:volmtric}). More generally, each point $x$ in the convex hull of a set $V$ of points may be expressed as a convex combination of certain points of $V$.\footnote{We regard this statement as tautological, by defining the convex hull of $V$ as the set of convex combinations of elements of $V$.} As elements of the closed unit interval $I$, the coefficients or \emph{weights} in these convex combinations constitute \emph{barycentric coordinates} of the point $x$ with respect to $V$. In the general situation, a single point $x$ may be recovered from various distinct expressions as a convex combination of points of $V$. For example, the barycenter of a square may be expressed as the midpoint of either of the two diagonals. This paper is concerned with the problem of determining a particular set of barycentric coordinates for each point $x$ of the convex hull of a set $V$ of points, primarily in the case where $V$ is the set of vertices or extreme points of a convex polygon.

Barycentric coordinates have been studied widely within the geometric literature, typically in response to the demands of numerical analysis and computer graphics. The current paper brings an algebraic perspective to the problem, based on \emph{barycentric algebras} which provide intrinsic descriptions of convex sets, semilattices, and more general semilattice-ordered systems of convex sets, independently of any ambient affine or vector space. The relevance of this algebraic perspective may be illustrated with a quotation from a paper published in 2007:
\begin{quote}
``Despite much work in the discrete 2D case, no explicit formulation of
barycentric coordinates for convex polytopes valid in arbitrary dimension is currently
available''
\end{quote}
\cite[\S1.4]{WarSchHirDes}. In fact, the Gibbs coordinates discussed below had already been introduced for any finitely-generated barycentric algebra five years earlier \cite[Ch.~IX]{Modes}, as part of a \emph{hierarchical statistical mechanics}  designed to handle complex systems functioning on multiple levels.

\subsection{Plan of the paper}

Section~\ref{S:BarycAlg} provides a review of those aspects of barycentric algebras which are pertinent to the coordinate systems discussed here. Barycentric algebras are presented as sets equipped with a collection of binary operations, indexed by the open real unit interval $I^\circ$. The operations are required to satisfy \emph{idempotence} \eqref{E:idemptnc}, \emph{skew commutativity} \eqref{E:skewcomm}, and \emph{skew associativity} \eqref{E:skewasoc} axioms. Section~\ref{S:BarycAlg} concludes with a brief dictionary that reconciles the differing terminologies for common concepts that have arisen in different areas. As an example, simplices are finitely generated free barycentric algebras (\S\ref{SSS:fralasmp}).

The Gibbs coordinates are presented in Section~\ref{S:GibbsBAl}. When restricted to convex polytopes, they are the probability weights appearing in the \emph{Gibbs} or \emph{Boltzmann distributions}, also described by the softmax function. The randomness of a finite probability distribution $(p_1,\dots,p_n)$ is measured by its \emph{entropy} $-\sum_{i=1}^np_i\log p_i$ (where $0\log0$ is interpreted as $0$). Amongst all expressions of a point $x$ as a convex combination with weights $(p_1,\dots,p_n)$ of points from a set $V$ of cardinality $n$, the Gibbs coordinates are the unique maximizers of the entropy (compare \cite{HormSuku}). In general, the Gibbs coordinates involve exponential functions. Nevertheless, when the points of the set $V$ are mutually related by rational convex combinations, the technique we introduce in \S\ref{E:equaGibs} may be used to handle Gibbs coordinates with algebraic functions, avoiding the transcendental exponentials.

The \emph{Wachspress coordinates} surveyed in Section~\ref{S:Wchspres} form another well-established system of barycentric coordinates \cite{Wachspress1,Wachspress2}. Since Wachspress coordinates only involve rational functions, they are amenable to techniques of algebraic geometry and commutative algebra \cite{IrvSch,KohRan}, although we avoid any discussion of that aspect in this paper. While the development of Wachspress coordinates has progressed through a series of stages recorded in the literature (such as \cite{FloaterWMVC, FloaterGBCA, KosBarW, KosBar, Skala}, \cite{Warren} -- \cite{WarSchHirDes}), our survey enjoys the benefit of hindsight. In particular, given the appearance of the boundary curvature in the formula \eqref{E:CntWxpWt} for Wachspress weights on strictly convex sets \cite{WarSchHirDes}, we emphasize the interpretation \cite[Fig.~1]{WarSchHirDes} of a determinant in Wachspress weights on a polygon as a discrete analogue of boundary curvature at a vertex of the polygon (\S\ref{SSS:WaWeSuFu}). Our casting of Wachspress coordinates as a bulk-boundary correspondence may be new (\S\ref{SSS:BulkBdry}).

Section~\ref{S:WchsGibs} presents a comparison of Gibbs and Wachspress coordinates. Theorem~\ref{T:WaGiSmSi} shows that they agree on \emph{semisimplices} --- polytopes which decompose as direct products of simplices when construed as barycentric algebras.
On the other hand, Example~\ref{X:WxNeqGbs} exhibits a quadrilateral where the Gibbs and Wachspress coordinates may differ. For a given $n$-gon $\Pi$ in the plane, the (\emph{G$-$W}) \emph{discrepancy field} is introduced in Definition~\ref{D:GWdscrpFld} as an $(n-1)$-dimensional vector field over $\Pi$ whose $i$-th coordinate, for $1\le i<n$, measures the difference between the $i$-th Gibbs coordinate and the $i$-th Wachspress coordinate. Proposition~\ref{P:GWdscrpFld} shows that the discrepancy vectors lie in a vector space of dimension $n-3$. Thus for quadrilaterals, the comparison between the Gibbs and Wachspress coordinates is adequately captured by the norm of the discrepancy vector, displayed as a contour plot in Figure~\ref{F:QrCrtGbs} for the quadrilateral of Example~\ref{X:WxNeqGbs}. The Gibbs and Wachspress coordinates agree on the boundary of the quadrilateral. They also agree on the \emph{equator}, an algebraic curve connecting two opposite vertices, whose equation is presented in Theorem~\ref{T:EqtrEqun}.

Section~\ref {S:ConcFuWk} offers some concluding remarks and pointers towards future research directions initiated by the approach of this paper.

\subsection{Notational conventions}

The essence of algebra lies in the repeated concatenation of successive functions, where the output of one function is fed as an input to the next. In this context, Euler's analytical notation as in the expression $\sin x$, where the argument is read after the function, clashes with the natural direction of reading from left to right.\footnote{Even within texts written in Semitic languages, mathematics is read in this direction.} Thus, while paying due respect to many traditional analytical notations, such as $H(\alpha)$ for the entropy \eqref{E:entalpha} and $Z(\beta)$ for the partition function \eqref{E:PartyFun}, our default option for function notation is \emph{algebraic} or \emph{diagrammatic}, with functions following their arguments (e.g., $n!$ for the factorial function), or placed as a suffix (e.g., $x^2$ for the squaring function). A fortunate consequence of this convention in linear algebra is that vectors (to be transformed by multiplication with a matrix following) appear as rows or tuples, and do not require transposition to fit on a line.

The expression $x\ll y$ is used to denote that a positive real number $x$ is not only less than a real number $y$, but also very small by comparison. Logarithms that are presented without an explicit base are natural. For other notational conventions and definitions that are not otherwise specified explicitly (e.g., for group actions), readers are invited to consult \cite{PMA}.

\section{Barycentric algebras}\label{S:BarycAlg}

\subsection{Operations on real numbers and binary digits}\label{SS:operreal}

The algebra of real numbers is traditionally performed in terms of field computations like the addition $p+q$ and division $q/p$ of real numbers $p$ and $q$, with $p\ne 0$ for the division. The barycentric algebra discussed in this paper requires \emph{operations} which are defined for all arguments, specializing to more familiar Boolean operations on the subset $\set{0,1}$ of the reals. In fact, this specialization will also work in any field. In particular, it works if the set $\set{0,1}$ of binary digits is interpreted as the two-element (Galois) field $\mathsf{GF}(2)$ or field of integers modulo $2$.

For a real number $p$, define the \emph{complementation}
$
p'=1-p
$
specializing to the Boolean $\neg p$ or $\textsc{not}\ p$ on the set $\set{0,1}$ of binary digits. Note that the complementation is \emph{involutive}:
$
p''=p \, .
$
For real numbers $p$ and $q$, define the \emph{product}
\begin{equation}\label{E:prodand}
p\cdot q=pq
\end{equation}
specializing to the Boolean $\wedge$ or $\textsc{and}$ on $\{0,1\}$. Define the \emph{dual product}
\begin{equation}\label{E:duprodor}
p\circ q = p+q-pq
\end{equation}
specializing to the Boolean $\vee$ or (non-exclusive) $\textsc{or}$ on $\set{0,1}$. Note that the dual product may be defined in terms of the product and complementation using \emph{De Morgan's law}
$
p\circ q=(p'q')'
$
or $(p\circ q)'=p'q'$. Define the \emph{implication}\footnote{In order to view a valid fraction as an implication, the lexicographically ordered mnemonic ``denominator implies numerator'' may be helpful.}
\begin{equation}\label{E:implicat}
p\to q=\mathbf{if}\ (p=0)\ \mathbf{then}\ 1\ \mathbf{else}\ \frac qp
\end{equation}
specializing to the Boolean implication $p\to q=(\neg p)\vee q$ on $\{0,1\}$. Note that, unlike the division $q/p$, the implication (\ref{E:implicat}) is always defined in any field.

\subsection{Binary operations}\label{SS:binopers}

When $x$ and $y$ are elements of a real vector space, and $p$ is a real number, it is convenient to define
\begin{equation}\label{E:punderln}
xy\,\underline{p}=x(1-p)+yp=xp'+yp \, ,
\end{equation}
so that $\underline{p}$ is understood as a binary operation combining the arguments $x$ and $y$. Schematically, the binary operation may be understood as a circuit element or ``black box'' combining the inputs $x$ and $y$ to produce the output $xy\,\underline{p}$. For a second real number $q$ and vector $z$, one may concatenate circuit elements to yield
\begin{equation}\label{E:compoper}
xy\,\underline{p}\, z\,\underline{q}
=(xp'+yp)z\underline{q}
=xp'q'+ypq'+zq \, .
\end{equation}
Alternatively one may concatenate the circuit elements to yield
\begin{equation}\label{E:opercomp}
x\,yz\,\underline{p}\,\,\underline{q}
=x(yp'+zp)\underline{q}
=xq'+yp'q+zpq \, .
\end{equation}
Note that the parsing of the left hand sides
of (\ref{E:compoper}) and (\ref{E:opercomp})
is unique, even without the insertion of any brackets. This is one of the many advantages of the algebraic notation (\ref{E:punderln}) for binary operations.
\begin{itemize}
\item
If $p$ is any real number, then the operation (\ref{E:punderln}) makes sense when the inputs $x$ and $y$ lie in a real affine space $A$, for example a real vector space where the special role of the origin $0$ is surpressed.
\item
If $p$ is an element of the closed real unit interval
\begin{equation}\label{E:closedIn}
I=[0,1]=\{p\mid 0\le p\le 1\} \, ,
\end{equation}
then the operation (\ref{E:punderln}) makes sense when the inputs $x$ and $y$ lie in some convex set $C$, for example some interval on the real line.
\item
If $p$ is a binary digit $0$ or $1$, the operation $xy\,\underline{p}$ makes sense as
$$
xy\,\underline{p}=
\mathbf{if}\ (p=0)\ \mathbf{then}\ x\ \mathbf{else}\ y
$$
when the inputs $x$ and $y$ are elements of some arbitrary set $S$.
\end{itemize}
Given an arbitrary set $S$, consider the convex set $SB$\footnote{This notation is taken from \eqref{E:FBAlgFct} below.} of all finite probability distributions on $S$, identifying each element $x$ of $S$ with the distribution putting weight $1$ on $x$. For elements $x$ and $y$ of $S$, and $p$ in $I$, the operation (\ref{E:punderln}) produces the distribution selecting $y$ with probability $p$ and $x$ with probability $p'$.

\subsection{Barycentric algebras}\label{SS:barycent}

\subsubsection{The basic definition}

\begin{definition}\cite{RS85,RS90,Modes}
A \emph{barycentric algebra} $A$ or $(A,I^\circ)$ is defined as a set $A$ that is equipped with binary \emph{operations}
\begin{equation}\label{E:OpOfBaAl}
\underline{p}\colon A\times A\to A;(x,y)\mapsto xy\,\underline{p}
\end{equation}
for each element (or \emph{operator}) $p$ of the open real unit interval
\begin{equation}\label{E:openItvl}
I^\circ=]0,1[=\{p\mid 0<p<1\} \, ,
\end{equation}
the interior of the closed interval \eqref{E:closedIn}. The operations \eqref{E:OpOfBaAl} are required to satisfy the properties of \emph{idempotence}
\begin{equation}\label{E:idemptnc}
xx\,\underline{p}=x
\end{equation}
for $x$ in $A$, \emph{skew-commutativity}
\begin{equation}\label{E:skewcomm}
xy\,\underline{p}=yx\,\underline{p'}
\end{equation}
for $x$, $y$ in $A$, and \emph{skew-associativity}
\begin{equation}\label{E:skewasoc}
xy\,\underline{p}\,z\,\underline{q}
=x\,yz\,\big(\,\underline{p\circ q\to q}\,\big)\,\underline{p\circ q}
\end{equation}
for $x$, $y$, $z$ in $A$.
\end{definition}

\subsubsection{Basic examples of barycentric algebras}

\begin{example}\label{X:CoAfBAlg}
When the barycentric algebra operations are interpreted by \eqref{E:punderln}, each convex set (and in particular, each affine space) forms a barycentric algebra.
\end{example}

\begin{example}\label{X:SemiLatt}
A \emph{semilattice} is a semigroup $(S,\cdot)$ which is commutative and idempotent. When each barycentric operation is interpreted as $xy\,\underline p=x\cdot y$ on a semilattice $S$, the semilattice becomes a barycentric algebra. In this case, skew-commutativity and skew-associativity reduce respectively to genuine commutativity and associativity.
\end{example}

\begin{example}\label{X:ExtReals}
Consider the set $\mathbb R^\infty=\mathbb R\cup\set\infty$ of real numbers extended by $\infty$. Take $\mathbb R$ with the barycentric algebra structure defined in Example~\ref{X:CoAfBAlg}. Then, define $x\infty\underline p=\infty$ for all $x\in\mathbb R^\infty$ and $p\in I^\circ$. The set $\mathbb R^\infty$ of extended reals becomes a barycentric algebra.
\end{example}

\subsubsection{Which barycentric algebras are convex sets?}

In Example~\ref{X:CoAfBAlg}, it was noted that convex sets form barycentric algebras. A barycentric algebra $(A,I^\circ)$ forms a convex set iff the \emph{cancellativity} property
$$
\forall\ p\in I^\circ\,,\
\forall\ x,y,z\in A\,,\
xy\,\underline p=xz\,\underline p
\
\Rightarrow
\
y=z
$$
holds \cite{N70}, \cite[Th.~269]{RS85}, \cite[Th.~5.8.6]{Modes}.

\begin{example}
The set $\set{0,1}$ of binary digits in $\mathbb R$ forms a semilattice, interpreted as a barycentric algebra by Example~\ref{X:SemiLatt}. Here, cancellativity fails, since $0\cdot 0=0=0\cdot 1$.
\end{example}

\subsubsection{The entropic property}

\begin{lemma}\label{L:entropic}
A barycentric algebra $(A,I^\circ)$ satisfies the \emph{entropic} property
\begin{equation}\label{E:entropic}
(st\,\underline p)(uv\,\underline p)\underline q
=
(su\,\underline q)(tv\,\underline q)\underline p
\end{equation}
for $s,t,u,v\in A$ and $p,q\in I^\circ$.
\end{lemma}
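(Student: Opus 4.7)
The plan is to reduce both sides of \eqref{E:entropic} to a common expression by two successive applications of skew-associativity, followed by a single use of skew-commutativity. The underlying motivation comes from the affine interpretation \eqref{E:punderln}: symbolically expanding each side via $xy\,\underline p=xp'+yp$ produces the formal sum $sp'q'+tpq'+up'q+vpq$, in which the inherent left--right symmetry reflects the commutativity $p\circ q=q\circ p$ on the reals. The task is to mirror this calculation purely with the barycentric axioms.

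The first step is to apply right skew-associativity \eqref{E:skewasoc} to the left-hand side of \eqref{E:entropic} with outer data $x=s$, $y=t$, $z=uv\,\underline p$, and to the right-hand side with outer data $x=s$, $y=u$, $z=tv\,\underline q$. Setting $\alpha=p\circ q\to q=q/(p\circ q)$ and $\beta=q\circ p\to p=p/(p\circ q)$, both sides take the form $s(\cdots)\,\underline{p\circ q}$, so entropicity reduces to the inner identity
$$
t(uv\,\underline p)\,\underline{\alpha}=u(tv\,\underline q)\,\underline{\beta}.
$$

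The second step converts both sides of this inner identity to left-normal form via left skew-associativity \eqref{E:lskewasc}, extracting $v$ as the common rightmost element. A short calculation shows that the outer operators on the two sides coincide, since $p\alpha=q\beta=pq/(p\circ q)$. The resulting inner operators work out to $p'q/(p+q-2pq)$ on the left and $pq'/(p+q-2pq)$ on the right; these are complements of each other, so one application of skew-commutativity \eqref{E:skewcomm} swaps $t$ and $u$ and reconciles the two sides.

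The main obstacle is bookkeeping rather than depth. Every rewriting step generates operator values given by cumbersome rational functions in $p$ and $q$, and one must patiently check, using only $p\circ q=p+q-pq$ and the definition \eqref{E:implicat} of implication, that the operators appearing after each transformation agree as claimed. No structural ingredient beyond the three defining axioms of barycentric algebra is required; in particular, the proof does not invoke cancellativity or any embedding into an affine space.
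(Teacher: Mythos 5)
Your proposal is correct and follows essentially the same route as the paper: the paper proves the semilattice analogue $(s\cdot t)\cdot(u\cdot v)=(s\cdot u)\cdot(t\cdot v)$ by the step sequence associativity, associativity, commutativity, associativity, associativity, and asserts that the general case ``proceeds similarly'' with the skew versions; your two outer applications of right skew-associativity, two inner applications of left skew-associativity, and one middle application of skew-commutativity are exactly that sequence, with the operator bookkeeping (including the verification that $p'q/(p+q-2pq)$ and $pq'/(p+q-2pq)$ are complementary) carried out explicitly where the paper leaves it to the reader.
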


\begin{proof}
If the proof were for a semilattice $(S,\cdot)$, as in Example~\ref{X:SemiLatt}, it would proceed as follows
\begin{align*}
(s\cdot t)\cdot(u\cdot v)
&
\overset{\mathrm A}=
s\cdot\big[t\cdot(u\cdot v)\big]
\overset{\mathrm A}=
s\cdot\big[(t\cdot u)\cdot v\big]
\\
&
\overset{\mathrm C}=
s\cdot\big[(u\cdot t)\cdot v\big]
\overset{\mathrm A}=
s\cdot\big[u\cdot(t\cdot v)\big]
\overset{\mathrm A}=
(s\cdot u)\cdot(t\cdot v)
\end{align*}
using associativity (A) and commutativity (C) at the indicated steps. The proof for general barycentric algebras proceeds similarly, instead using skew-associativity and skew-commutativity at the corresponding steps.
\end{proof}

\subsection{Subalgebras and homomorphisms}

\subsubsection{Subalgebras, walls, and sinks}

\begin{definition}\label{D:SbWaSink}
Suppose that $(A,I^\circ)$ is a barycentric algebra, and $B$ is a subset of $A$.
\begin{enumerate}
\item[$(\mathrm a)$]
If
$$
\forall\ p\in I^\circ\,,\
x\in B
\mbox{ and }
y\in B
\
\Rightarrow
\
xy\,\underline p\in B\,,
$$
then the subset $B$ is a \emph{subalgebra} of $(A,I^\circ)$.
\item[$(\mathrm b)$]
If
$$
\forall\ p\in I^\circ\,,\
x\in B
\mbox{ and }
y\in B
\
\Leftrightarrow
\
xy\,\underline p\in B\,,
$$
then the subset $B$ is a \emph{wall} of $(A,I^\circ)$.
\item[$(\mathrm c)$]
If
$$
\forall\ p\in I^\circ\,,\
x\in B
\mbox{ or }
y\in B
\
\Rightarrow
\
xy\,\underline p\in B\,,
$$
then the subset $B$ is a \emph{sink} of $(A,I^\circ)$.
\end{enumerate}
\end{definition}

\begin{remark}\label{R:SbWaSink}
In Definition~\ref{D:SbWaSink}(a), the subalgebra $B$ is a barycentric algebra $(B,I^\circ)$ in its own right.
\end{remark}

\begin{example}
If $(A,I^\circ)$ is a polytope according to Example~\ref{X:CoAfBAlg}, then $B$ is a wall if and only if it is a \emph{face} in the terminology of \cite[\S\S5,7]{AB83}.
\end{example}

\begin{example}
In the barycentric algebra $(\mathbb R^\infty,I^\circ)$ of extended reals, as in Example~\ref{X:ExtReals}, the subset $\mathbb R$ is a subalgebra, and the subset $\set\infty$ is a sink.
\end{example}

\begin{lemma}
Walls and sinks are subalgebras.
\end{lemma}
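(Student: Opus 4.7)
The plan is to observe that this follows almost immediately from the logical form of the definitions in Definition~\ref{D:SbWaSink}, so no new algebraic computation is needed; one only has to unpack the quantifiers.

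For walls, I would note that the defining condition in Definition~\ref{D:SbWaSink}(b) is a biconditional whose forward direction is \emph{verbatim} the subalgebra condition of Definition~\ref{D:SbWaSink}(a). Hence if $B$ is a wall, then for any $p\in I^\circ$ and any $x,y\in B$, the forward implication of the biconditional yields $xy\,\underline p\in B$, which is precisely closure under the operations, so $B$ is a subalgebra.

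For sinks, I would argue as follows. Suppose $B$ is a sink, and fix $p\in I^\circ$ together with $x,y\in B$. Then in particular the disjunction ``$x\in B$ or $y\in B$'' holds (either disjunct alone suffices). The defining implication of Definition~\ref{D:SbWaSink}(c) therefore gives $xy\,\underline p\in B$. Since $p$, $x$, and $y$ were arbitrary, $B$ satisfies the subalgebra condition of Definition~\ref{D:SbWaSink}(a).

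There is essentially no obstacle: the content is purely propositional, reflecting the pattern that conjunction implies disjunction, and that a biconditional implies each of its component implications. The only thing to be careful about is to quote the quantifier ranges faithfully, so that the verification is valid for every $p\in I^\circ$ and that the conclusion lands in the same set $B$ in both cases.
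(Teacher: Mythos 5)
Your proof is correct and matches the argument the paper leaves implicit (the lemma is stated without proof there, being a purely propositional consequence of Definition~\ref{D:SbWaSink}): the forward half of the wall biconditional is literally the subalgebra condition, and for sinks the conjunction $x\in B$ and $y\in B$ entails the disjunction in the sink hypothesis. Nothing is missing.
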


\begin{proposition}
Let $(A,I^\circ)$ be a barycentric algebra.
\begin{enumerate}
\item[$(\mathrm a)$]
Suppose that, for some index set $G$, there is a family $\set{B_\gamma|\gamma\in G}$ of subalgebras, walls, or sinks of $(A,I^\circ)$. Then the intersection
$
\bigcap_{\gamma\in G}B_\gamma
$
of the family is again (respectively) a subalgebra, wall, or sink of $(A,I^\circ)$.
\item[$(\mathrm b)$]
The respective sets of subalgebras, walls, or sinks of $(A,I^\circ)$ form complete lattices.
\end{enumerate}
\end{proposition}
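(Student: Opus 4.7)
The plan is to handle the three types (subalgebra, wall, sink) in part (a) by direct unpacking of the defining implications of Definition~\ref{D:SbWaSink}, and then to derive part (b) as a routine consequence of (a) together with the fact that the ambient algebra $A$ belongs to each of the three collections.

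For part (a), set $B=\bigcap_{\gamma\in G}B_\gamma$ and fix $p\in I^\circ$ together with $x,y\in A$. For the \emph{subalgebra} case, if $x,y\in B$ then $x,y\in B_\gamma$ for every $\gamma$, so $xy\,\underline p\in B_\gamma$ for every $\gamma$ by Definition~\ref{D:SbWaSink}(a) applied to each $B_\gamma$, whence $xy\,\underline p\in B$. For the \emph{wall} case, the forward implication is the subalgebra argument just given; for the reverse implication, note that if $xy\,\underline p\in B$ then $xy\,\underline p\in B_\gamma$ for each $\gamma$, so the wall property of $B_\gamma$ yields $x,y\in B_\gamma$ for each $\gamma$, hence $x,y\in B$. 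For the \emph{sink} case, if $x\in B$ or $y\in B$, then that same element lies in every $B_\gamma$, so by the sink property of each $B_\gamma$ we get $xy\,\underline p\in B_\gamma$ for every $\gamma$, and therefore $xy\,\underline p\in B$. I do not anticipate any real obstacle here; the only subtle point is that the wall case genuinely requires verifying both directions of the biconditional, rather than just the one implication sufficing for subalgebras and sinks.

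For part (b), denote by $\mathcal L_{\mathrm{sub}}$, $\mathcal L_{\mathrm{wall}}$, $\mathcal L_{\mathrm{sink}}$ the respective collections, each partially ordered by set inclusion. The whole algebra $A$ is trivially a subalgebra, a wall, and a sink (the defining implications hold because the relevant conclusions are automatic), so each collection has a top element $A$. Combined with the closure under arbitrary intersections established in (a), this is precisely the standard Moore-closure criterion: any family of subsets of a fixed set that contains the whole set and is closed under arbitrary intersections forms a complete lattice, with meet given by intersection and join given by
\[
\bigvee_{\gamma\in G}B_\gamma\;=\;\bigcap\bigl\{C\in\mathcal L\;\bigm|\;C\supseteq\textstyle\bigcup_{\gamma\in G}B_\gamma\bigr\}.
\]
Applying this observation to each of $\mathcal L_{\mathrm{sub}}$, $\mathcal L_{\mathrm{wall}}$, $\mathcal L_{\mathrm{sink}}$ yields the claim; the empty meet gives $A$ as the top of each lattice, and the empty join (equivalently the intersection of all members) furnishes the bottom. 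The only thing worth checking carefully is that walls really do satisfy $A\in\mathcal L_{\mathrm{wall}}$, which holds since both $x,y\in A$ and $xy\,\underline p\in A$ are always true, making the biconditional vacuous; once that is noted, part (b) is immediate from (a).
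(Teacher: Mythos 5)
Your proof is correct and is exactly the routine argument the paper has in mind: the paper states this proposition without proof, treating both the intersection-closure check and the Moore-closure criterion for complete lattices as standard. Your careful handling of the two directions of the biconditional in the wall case, and the observation that $A$ itself belongs to each of the three collections, are precisely the points that need checking, and you check them correctly.
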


\begin{definition}
Let $(A,I^\circ)$ be a barycentric algebra, with a subset $S$. Then the \emph{subalgebra}, \emph{wall}, or \emph{sink} $\braket S$ \emph{generated} by $S$ is the intersection of all the subalgebras, walls, or sinks of $(A,I^\circ)$ that contain $S$.
\end{definition}

\begin{example}
Let $S$ be a set of points in a real affine space $A$. Interpret $A$ as a barycentric algebra $(A,I^\circ)$ according to Example~\ref{X:CoAfBAlg}. Then the \emph{convex hull} of $S$ is the subalgebra $\braket S$ of $(A,I^\circ)$ that is generated by $S$.
\end{example}

\subsubsection{Homomorphisms of barycentric algebras}

\begin{definition}\label{D:pontwise}
Suppose that $X$ and $Y$ are sets, and that $(A',I^\circ)$ is a barycentric algebra.
\begin{enumerate}
\item[$(\mathrm a)$]
Write $\mathbf{Set}(X,Y)$ for the set of all functions from $X$ to $Y$.
\item[$(\mathrm b)$]
For $p\in I^\circ$, define the \emph{pointwise} or \emph{componentwise} operation
\begin{equation}\label{E:pontwise}
fg\,\underline p\colon X\to A';x\mapsto x^fx^g\,\underline p
\end{equation}
on elements $f,g$ of $\mathbf{Set}(X,A')$.
\end{enumerate}
\end{definition}

\begin{proposition}\label{P:pontwise}
Within the context of Definition~\ref{D:pontwise}, the set $\mathbf{Set}(X,A')$ becomes a barycentric algebra under the pointwise operations \eqref{E:pontwise}.
\end{proposition}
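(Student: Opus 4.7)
The plan is to verify the three defining axioms of a barycentric algebra --- idempotence \eqref{E:idemptnc}, skew-commutativity \eqref{E:skewcomm}, and skew-associativity \eqref{E:skewasoc} --- for the pointwise operations \eqref{E:pontwise} on $\mathbf{Set}(X,A')$. Each axiom is a universally quantified equation between terms built from the binary operations, and the pointwise definition is engineered precisely so that such equations lift from $A'$ to the function space by checking them at an arbitrary $x\in X$.

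First I would dispatch idempotence and skew-commutativity, which require only a single layer of \eqref{E:pontwise}. For $f,g\in\mathbf{Set}(X,A')$ and $p\in I^\circ$, the functions $ff\,\underline p$ and $f$ agree at every $x\in X$ because $x^{ff\,\underline p}=x^fx^f\,\underline p=x^f$ by idempotence in $A'$. Similarly $fg\,\underline p$ and $gf\,\underline{p'}$ agree at every $x\in X$ by the skew-commutativity of $A'$ applied to the pair $x^f,x^g$.

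The only step that needs brief unpacking is skew-associativity, since the terms $fg\,\underline p\,h\,\underline q$ and $f\,gh\,\bigl(\,\underline{p\circ q\to q}\,\bigr)\,\underline{p\circ q}$ are compound expressions with two nested operations. Here I would iterate the definition \eqref{E:pontwise} to show that the value of $fg\,\underline p\,h\,\underline q$ at $x\in X$ is $x^fx^g\,\underline p\,x^h\,\underline q$ in $A'$, and likewise that the right-hand side evaluates to the corresponding compound term on $x^f,x^g,x^h$ in $A'$. Pointwise equality of the two functions then follows immediately from skew-associativity in $A'$, and hence equality as elements of $\mathbf{Set}(X,A')$.

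No genuine obstacle is anticipated: the proposition is a particular case of the fact that any equationally-defined class of algebras is closed under the formation of pointwise operations on $\mathbf{Set}(X,A')$. The only real content is the bookkeeping needed to unroll the nested pointwise operations in the skew-associativity axiom, so the proof will be organized as one short paragraph per axiom with this unrolling made explicit.
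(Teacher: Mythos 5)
Your proposal is correct and follows exactly the paper's route: the paper's proof likewise just observes that idempotence, skew-commutativity, and skew-associativity for the pointwise operations are direct consequences of the corresponding identities in $(A',I^\circ)$, checked at each $x\in X$. Your version merely makes the unrolling of the nested operations in the skew-associativity axiom explicit, which the paper leaves implicit.
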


\begin{proof}
The idempotence, skew-commutativity and skew-associativity for the pointwise operations on $\left(\mathbf{Set}(X,A'),I^\circ\right)$ arise as direct consequences of the corresponding properties of $(A',I^\circ)$.
\end{proof}

\begin{definition}\label{D:BaHomSet}
Suppose that $(A,I^\circ)$ and $(A',I^\circ)$ are barycentric algebras.
\begin{enumerate}
\item[$(\mathrm a)$]
A function $f\colon A\to A';x\mapsto x^f$ is said to be a \emph{barycentric} (\emph{algebra}) \emph{homomorphism} if
\begin{equation}\label{E:BaryAHom}
xy\,\underline p^f=x^fy^f\,\underline p
\end{equation}
for all $x,y\in A$ and $p\in I^\circ$.
\item[$(\mathrm b)$]
A homomorphism is an \emph{isomorphism} if it is bijective.
\item[$(\mathrm c)$]
Write $\mathbf B(A,A')$ for the set of all barycentric homomorphisms from $(A,I^\circ)$ to $(A',I^\circ)$.
\end{enumerate}
\end{definition}

\begin{proposition}\label{P:BaHomSet}
Within the context of Definition~\ref{D:BaHomSet}, the set $\mathbf B(A,A')$ becomes a barycentric algebra under the pointwise operations \eqref{E:pontwise}.
\end{proposition}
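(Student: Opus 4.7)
The plan is to realize $\mathbf{B}(A,A')$ as a subalgebra of the pointwise barycentric algebra $\mathbf{Set}(A,A')$ supplied by Proposition~\ref{P:pontwise}, so that the required idempotence, skew-commutativity, and skew-associativity of $\mathbf{B}(A,A')$ under \eqref{E:pontwise} are inherited for free (compare Remark~\ref{R:SbWaSink}(a)). The only substantive task is therefore to verify closure: for each $p\in I^\circ$ and each pair $f,g\in\mathbf{B}(A,A')$, the pointwise combination $h := fg\,\underline p$ should again be a barycentric homomorphism.

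First I would fix $x,y\in A$ and $q\in I^\circ$, then expand $(xy\,\underline q)^h$ using the pointwise definition \eqref{E:pontwise} together with the homomorphism property \eqref{E:BaryAHom} applied to $f$ and to $g$:
\[
(xy\,\underline q)^h \;=\; (xy\,\underline q)^f\,(xy\,\underline q)^g\,\underline p \;=\; (x^f y^f\,\underline q)\,(x^g y^g\,\underline q)\,\underline p .
\]
Unwinding $x^h y^h\,\underline q$ via \eqref{E:pontwise} in the same fashion yields
\[
x^h y^h\,\underline q \;=\; (x^f x^g\,\underline p)\,(y^f y^g\,\underline p)\,\underline q .
\]
Equality of these two expressions amounts precisely to interchanging the inner operator $q$ with the outer operator $p$ in a four-term compound.

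This interchange is the content of the entropic identity \eqref{E:entropic} for the codomain $(A',I^\circ)$, supplied in advance by Lemma~\ref{L:entropic}: applied with $s = x^f$, $t = y^f$, $u = x^g$, $v = y^g$ and with the roles of $p$ and $q$ matched to the display above, it delivers $(xy\,\underline q)^h = x^h y^h\,\underline q$. Hence $h\in\mathbf{B}(A,A')$, and closure under each pointwise operation is confirmed, so $\mathbf{B}(A,A')$ is a subalgebra of $\mathbf{Set}(A,A')$ and therefore a barycentric algebra in its own right. There is no genuine obstacle in the argument; the only non-routine step is the invocation of the entropic law, which is exactly why Lemma~\ref{L:entropic} was prepared in advance.
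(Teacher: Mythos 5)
Your proposal is correct and follows essentially the same route as the paper: both establish that $fg\,\underline p$ is again a homomorphism by expanding via \eqref{E:pontwise} and \eqref{E:BaryAHom} and then invoking the entropic identity \eqref{E:entropic} to interchange the operators, thereby exhibiting $\mathbf B(A,A')$ as a subalgebra of $\mathbf{Set}(A,A')$. No discrepancies to report.
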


\begin{proof}
The computation
\begin{align*}
(xy\,\underline q)^{fg\,\underline p}
&
\overset{\eqref{E:pontwise}}=
(xy\,\underline q)^f(xy\,\underline q)^g\,\underline p
\overset{\eqref{E:BaryAHom}}=
(x^fy^f\,\underline q)(x^gy^g\,\underline q)\underline p
\\
&
\overset{\eqref{E:entropic}}=
(x^fx^g\,\underline p)(y^fy^g\,\underline p)\underline q
\overset{\eqref{E:pontwise}}=
x^{fg\,\underline p}y^{fg\,\underline p}\underline q
\end{align*}
for $x,y\in A$ and $q\in I^\circ$ shows that the function $fg\,\underline p\colon A\to A'$ of \eqref{E:pontwise} is a barycentric algebra homomorphism. Thus, the subset $\mathbf B(A,A')$ is a subalgebra of the barycentric algebra $\mathbf{Set}(A,A')$ of Proposition~\ref{P:pontwise}.
\end{proof}

\subsection{The category of barycentric algebras}\label{SS:CatBaryA}

Since the composite of two barycentric homomorphisms
$$
f\colon(A,I^\circ)\to(A',I^\circ);x\mapsto x^f
\mbox{ and  }
g\colon(A',I^\circ)\to(A'',I^\circ);y\mapsto y^g
$$
is a barycentric algebra homomorphism $fg\colon(A,I^\circ)\to(A'',I^\circ);x\mapsto x^{fg}$, the class $\mathbf B_0$ of barycentric algebras forms (the object class of) a category $\mathbf B$ of barycentric algebras, with the set $\mathbf B(A,A')$  of Definition~\ref{D:pontwise} as the set of morphisms from $A$ to $A'$.

There is a forgetful functor $U\colon\mathbf B\to\mathbf{Set}$ from $\mathbf B$ to the category $\mathbf{Set}$ of sets, defined by the subset relationship $\mathbf B(A,A')\subseteq\mathbf{Set}(A,A')$. This functor has a left adjoint
\begin{equation}\label{E:FBAlgFct}
B\colon\mathbf{Set}\to\mathbf B
\end{equation}
which assigns the \emph{free barycentric algebra} $XB$ to a set $X$, together with an insertion function (adjunction unit) $\eta_X\colon X\to XBU$. Thus, for each barycentric algebra $(A,I^\circ)$ and function $f\colon X\to A$, there is a unique barycentric homomorphism $\overline f\colon XB\to A$ such that $\eta_X\overline f=f$.

\subsubsection{Construction of the free barycentric algebra}\label{SSS:FreeBAlg}

Given a set $X$, and the real line $(\mathbb R,I^\circ)$ as a barycentric algebra according to Example~\ref{X:CoAfBAlg}, take the barycentric algebra $\mathbf{Set}(X,\mathbb R)$ of Proposition~\ref{P:pontwise}. Consider the function
\begin{equation}\label{E:delta2xR}
X\to\mathbf{Set}(X,\mathbb R);
x\mapsto
\big[
\,
\delta_x\colon y\mapsto
\mbox{ \textbf{if} } y=x \mbox{ \textbf{then} } 1 \mbox{ \textbf{else} } 0
\,
\big]
\,.
\end{equation}
Define $XB$ as the subalgebra $\braket{\set{\delta_x|x\in X}}$ of $\mathbf{Set}(X,\mathbb R)$ generated by the set of delta functions, and define the insertion function $\eta_X\colon X\to XBU$ to be the corestriction of \eqref{E:delta2xR} to the subset $XBU$ of $\mathbf{Set}(X,\mathbb R)$.

By the Well-Ordering Theorem, the set $X$ may be given a total order $(X,\le)$. Repeated application of the axioms \eqref{E:idemptnc}--\eqref{E:skewasoc} for a barycentric algebra then shows that each element of $XB$ has a unique expression of the form
\begin{equation}\label{E:deltasqs}
\delta_{x_0}\delta_{x_1}\dots\delta_{x_r}\,\underline q_1\dots\underline q_r
\end{equation}
for some $r\in\mathbb N$, ordered subset $\set{x_0<x_1\dots<x_r}$ of $X$, and operators $q_1,\dots, q_r\in I^\circ$ (compare \cite[Lemma~5.8.1]{Modes}). The image of \eqref{E:deltasqs} under the extension $\overline f$ is then taken as the barycentric combination $x_0^fx_1^f\dots x_r^f\,\underline q_1\dots\underline q_r$ in $(A,I^\circ)$.

\subsubsection{Probability distributions and weighted averages}\label{SSS:PrDiWeAv}

The element \eqref{E:deltasqs} may be written as the convex combination
\begin{equation}\label{E:deltasps}
\sum_{k=0}^r\delta_{x_k}p_k
\end{equation}
with coefficients $p_k=q_kq_{k+1}'\dots q_r'\in I^\circ$ for $0\le k\le r$ and $q_0=1$. Thus, barycentric combinations may be taken as finitely supported probability distributions or weighted averages. In \eqref{E:deltasps}, weight $p_k$ is attached to the representation $x_k\eta_X=\delta_{x_k}$ of $x_k$. The weights are barycentric coordinates in the sense of \cite[\S31]{Moebius}.

\subsubsection{Products of barycentric algebras}\label{SSS:ProdBAlg}

For barycentric algebras $(A,I^\circ)$ and $(A',I^\circ)$, their \emph{product} $A\times A'$ is defined as the set $\set{(x,x')|x\in A\,,\ x'\in A'}$ with componentwise structure
$
(x,x')(y,y')\underline p=\left(xy\,\underline p,x'y'\,\underline p\right)
$
and \emph{projections}
$$
\pi\colon A\times A'\to A;(x,x')\mapsto x\,,\quad
\pi'\colon A\times A'\to A;(x,x')\mapsto x'\,.
$$
There is then a natural isomorphism
$$
\mathbf B(A'',A\times A')\cong\mathbf B(A'',A)\times\mathbf B(A'',A')
$$
for each barycentric algebra $(A'',I^\circ)$ given by $f\mapsto(f\pi,f\pi')$. Note that the product as described here is often called the \emph{direct product} or \emph{Cartesian product}.

\subsubsection{Symmetric monoidal categories}

The category $\mathcal L$ of real vector spaces and linear transformations carries the additonal structure $(\mathcal L,\otimes,\mathbb R)$ of a symmetric monoidal category \cite[p.180]{MacLane}, with the one-dimensional real vector space $\mathbb R$, the ``free vector space on one generator'', as a unit element for the usual tensor product $U\otimes V$ of vector spaces $U$ and $V$. Since the set $\mathcal L(U,V)$ of linear transformations from $U$ to $V$ is itself a vector space, the (natural) isomorphism
\begin{equation}\label{E:LUVWLUVW}
\mathcal L(U,\mathcal L(V,W))\cong\mathcal L(U\otimes V,W)\,,
\end{equation}
interpreting bilinear functions $U\times V\to W$ to a vector space $W$ as linear functions $U\otimes V\to W$, serves to specify the tensor product.

The category $\mathbf B$ of barycentric algebras carries a similar structure \cite{DD85,Sm168}. By analogy with \eqref{E:LUVWLUVW}, the (natural) isomorphism
\begin{equation}\label{E:BAAABAAA}
\mathbf B(A,\mathbf B(A',A''))\cong\mathbf B(A\otimes A',A'')
\end{equation}
defines a \emph{tensor product} $A\otimes A'$ of barycentric algebras $A$ and $A'$, with the singleton barycentric algebra as a unit. Note the use in \eqref{E:BAAABAAA} of the fact (Proposition~\ref{P:BaHomSet}) that $\mathbf B(A',A'')$ is a barycentric algebra.

If $X$ and $Y$ are sets, then the isomorphism
\begin{equation}\label{E:XYB2XBYB}
(X\times Y)B\cong XB\otimes YB
\end{equation}
identifies the free barycentric algebra on the direct product $X\times Y$ as the tensor product of the respective free algebras on $X$ and $Y$. (Both sides of the isomorphism satisfy the same categorical universality property.) The isomorphism \eqref{E:XYB2XBYB} is the counterpart of the observation from linear algebra that the dimension of a tensor product of vector spaces is the product of the dimensions of the factors.

\subsection{A pocket dictionary}

When a particular topic is analyzed from the standpoint of different fields, it becomes necessary to reconcile the differing and sometimes conflicting terminologies used in those fields. The basic assumption for this section is that a barycentric algebra $A$ is generated by a finite set $V$. We attempt to correlate the diverging terminologies of algebra and analysis.

\subsubsection{Free algebras and simplices}\label{SSS:fralasmp}

As a convex set barycentric algebra, a finite-dimensional simplex $A$ is generated by its vertex set $V$. Simplices are standard models of finitely generated free barycentric algebras as considered in \S\ref{SSS:FreeBAlg}. The freeness means that each function $f\colon V\to C$ from the generating set $V$ to (the underlying set of) a barycentric algebra $C$ has a unique extension to a barycentric homomorphism $\overline f\colon A\to C$.

\subsubsection{Kernel functions}\label{SSS:KernlFun}

Under the assumption of this section, each element $a$ of $A$ may be written as a barycentric combination
\begin{equation}\label{E:brycmbkn}
a=\sum_{v\in V}p(a,v)v
\end{equation}
with some specific choice of $p(a,v)\in I$, as observed in \S\ref{SSS:PrDiWeAv}.\footnote{The full real unit interval $I$ is required here, since some of the weights may be zero, and one of the weights may actually be $1$.} Suppose that this has been done. One may then focus on the function
\begin{equation}\label{E:kernelfn}
p\colon A\times V\to I;(a,v)\mapsto p(a,v)
\end{equation}
which, in the language of analysis, may be called a \emph{kernel} (\emph{function}) due to its role in \eqref{E:brycmbkn}, or in continuous versions of \eqref{E:brycmbkn} such as \cite[\S3.1]{KosBarW}. Examples are given in Definitions~\ref{D:Gibalpha} and \ref{D:WxprsPlr}.

\subsubsection{Partitions of unity and linear precision}\label{SSS:P1LinPre}

The curried (or parametrized univariate) versions
\begin{equation}\label{E:CoordFun}
A\to I;a\mapsto p(a,v)
\quad
\mbox{ for each }
\
v\in V
\end{equation}
of the bivariate kernel function \eqref{E:kernelfn} are viewed as \emph{coordinate functions} (or ``barycentric coordinates'' \cite[\S1.1]{WarSchHirDes}). The equation
\begin{equation}\label{E:PartoOne}
\sum_{v\in V}p(a,v)=1
\end{equation}
is then expressed as saying that the coordinate functions form a \emph{partition of unity}, while \eqref{E:brycmbkn} is said to express the \emph{barycentric property} \cite[(3)]{KosBarW} or \emph{linear precision} \cite[\S1.1]{WarSchHirDes}. We record the following for subsequent reference.

\begin{lemma}\label{L:BPimpPO1}
If the barycentric property \eqref{E:brycmbkn} holds, then the partition of unity property \eqref{E:PartoOne} follows.
\end{lemma}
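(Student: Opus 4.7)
My plan is to pull the equation \eqref{E:brycmbkn} back to the free barycentric algebra $VB$, where the ambient affine structure of $\mathbf{Set}(V,\mathbb R)$ from \S\ref{SSS:FreeBAlg} is available, and then to verify partition of unity there by a direct structural induction.

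Since $a$ lies in the subalgebra of $A$ generated by $V$, there is a $\sigma\in VB$ with $e(\sigma)=a$, where $e\colon VB\to A$ is the evaluation homomorphism extending the inclusion $V\hookrightarrow A$; the expression \eqref{E:brycmbkn} corresponds to a representation $\sigma=\sum_{v\in V}p(a,v)\delta_v$ inside $VB\subseteq\mathbf{Set}(V,\mathbb R)$, with the $\delta_v$ the delta functions of \eqref{E:delta2xR}.

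I would then show by structural induction on the complexity of an iterated expression for an element $\sigma'\in VB$, carried out inside the ambient real vector space $\mathbf{Set}(V,\mathbb R)$, that the coefficients in the $\delta_v$-expansion of $\sigma'$ sum to $1$. The base case is $\sigma'=\delta_v$, whose coefficient at $v$ is $1$ and at every other generator is $0$. For the inductive step, if $\sigma'=\tau\rho\,\underline p$ with $\tau=\sum_v\alpha_v\delta_v$ and $\rho=\sum_v\beta_v\delta_v$ already satisfying $\sum_v\alpha_v=\sum_v\beta_v=1$, then \eqref{E:punderln} gives $\sigma'=\sum_v(\alpha_v p'+\beta_v p)\delta_v$ with
\[
\sum_v(\alpha_v p'+\beta_v p)=p'\sum_v\alpha_v+p\sum_v\beta_v=p'+p=1.
\]
Specialising to the $\sigma$ presenting $a$ yields $\sum_{v\in V}p(a,v)=1$, which is \eqref{E:PartoOne}.

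The one point I would flag at the outset is the reading of \eqref{E:brycmbkn}: the right-hand side must be interpreted as a barycentric (equivalently, convex) combination inside $(A,I^\circ)$, that is, as the image under $e$ of an element of $VB$, rather than as a free real-linear relation in some ambient vector space into which $A$ might happen to embed. Under this intended reading, partition of unity is essentially built into the canonical convex-combination form of \S\ref{SSS:PrDiWeAv}, and the induction above simply makes the bookkeeping explicit; no genuine obstacle arises.
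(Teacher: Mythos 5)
Your proof is correct, but it takes a more roundabout route than the paper's. The paper's entire argument is one line: the constant map $k\colon A\to\set{1}\subseteq\mathbb R$ is a barycentric homomorphism, and applying it to both sides of \eqref{E:brycmbkn} sends the left side to $1$ and the right side to $\sum_{v\in V}p(a,v)\cdot 1=\sum_{v\in V}p(a,v)$, giving \eqref{E:PartoOne} immediately. Your argument instead lifts $a$ to the free algebra $VB\subseteq\mathbf{Set}(V,\mathbb R)$ and shows by structural induction that every element of $VB$ has $\delta_v$-coefficients summing to $1$. That induction is in effect a verification that the ``sum of coefficients'' functional $\mathbf{Set}(V,\mathbb R)\to\mathbb R$ is a barycentric homomorphism taking the value $1$ on each generator $\delta_v$; it is the free-algebra incarnation of the same constant homomorphism $k$. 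So the two proofs rest on the identical principle, but yours pays the extra cost of the lift to $VB$ (existence of a preimage $\sigma$ of $a$, and the identification of the weights $p(a,v)$ with the coefficients of $\sigma$ in the ambient vector space), which the paper avoids by working directly in $A$. Your closing caveat --- that \eqref{E:brycmbkn} must be read as a barycentric combination rather than a free linear relation --- is exactly right and is the real content of the lemma; the paper makes the same point implicitly by treating the right-hand side as an expression preserved by homomorphisms.
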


\begin{proof}
Suppose that \eqref{E:brycmbkn} holds for an element $a$ of $A$. Then, we may consider the constant barycentric homomorphism $k:A\to\set 1$. Thus \eqref{E:PartoOne} is obtained as the image of \eqref{E:brycmbkn} under $k$.
\end{proof}

\subsubsection{Interpolation}

A function $f\colon V\to C$ from $V$ to some codomain barycentric algebra $C$ may be \emph{extended} (in the algebraist's terminology) or \emph{interpolated} (in the analyst's terminology) to a function $\widehat f\colon A\to C$ with
$$
\widehat f(a)=\sum_{v\in V}p(a,v)f(v)
$$
making use of \eqref{E:brycmbkn} \cite[(2)]{WarSchHirDes}. As discussed in \S\ref{SSS:fralasmp}, the extension $\widehat f$ will be uniquely specified as the barycentric homomorphism $\overline f$ when  $A$ is freely generated by $V$.

\subsubsection{Direct product or tensor product?}\label{SSS:dipotepo}

Note that the direct product of barycentric algebras (\S\ref{SSS:ProdBAlg}) may be described as a ``tensor product'' in the literature that is focused on linearity, e.g:
\begin{quote}
``The unit square is the tensor of two unit intervals''
\end{quote}
\cite[p.98]{Warren}.
Since the unit interval implements the free barycentric algebra on two generators (the binary digits $0$ and $1$), \eqref{E:XYB2XBYB} shows that the tensor of two unit intervals, namely the free algebra on $\set{00,01,10,11}$, is actually a tetrahedron, not the unit square.

\section{Gibbs coordinates}\label{S:GibbsBAl}

\subsection{Canonical distributions}\label{SS:CanoDist}

We summarize the construction of Gibbs coordinates from \cite[Ch.~IX]{Modes}, in a notation that is adapted to the current discussion. Let $X$ be a finite set of \emph{microstates}. Suppose that $A$ is a barycentric algebra that is generated by the image of a \emph{valuation function} $f\colon X\to A;x\mapsto x^f$. The generation of the algebra is not required to be irredundant. For example, if $A$ is a polytope, then it is not required that all the elements of the image be extreme. Since $A$ is generated by the image $Xf$, each element $\alpha$ of $A$ may be written as a barycentric combination
\begin{equation}\label{E:alphapax}
\alpha=\sum_{x\in X}p_xx^f
\end{equation}
with a probability distribution $p_x$ on $X$ (cf. \S\ref{SSS:PrDiWeAv}). Note that $p_x$ need not necessarily be the unique distribution for which \eqref{E:alphapax} holds. For $\alpha$ in $A$, the quantity
\begin{equation}\label{E:entalpha}
H(\alpha):=\sup\set{-\sum_{x\in X}p_x\log p_x
|\alpha=\sum_{x\in X}p_xx^f}
\end{equation}
is defined as the \emph{entropy} of the element $\alpha$. Here, $0\log 0=0$.

Consider the barycentric algebra $\mathbb R^\infty$ of extended real numbers presented in Example~\ref{X:ExtReals}. Define $e^{-\infty}=0$. Define $A^*$ to be the set $\mathbf B(A,\mathbb R^\infty)$ of all barycentric homomorphisms $\beta$ from $A$ to $\mathbb R^\infty$. By Proposition~\ref{P:BaHomSet}, it forms a barycentric algebra under the pointwise structure inherited from $\mathbb R^\infty$. Now consider an element $\beta$ of $A^*$. As a barycentric homomorphism, it is already specified by the function
$$
X\to\mathbb R^\infty;x\mapsto xf\beta\,.
$$
Define the \emph{partition function}
\begin{equation}\label{E:PartyFun}
Z\colon A^*\to[0,\infty[;\beta\mapsto\sum_{x\in X}e^{-xf\beta} \,.
\end{equation}
Then an element $\beta$ of $A^*$ determines the \emph{Gibbs} or \emph{canonical} probability distribution
\begin{equation}\label{E:GibsDist}
q^\beta_x=\frac{e^{-xf\beta}}{Z(\beta)}
\end{equation}
on $X$.

\begin{theorem}\label{T:Gibalpha}\cite[Th.~IX.9.8.1]{Modes}
Consider an element $\alpha$ of $A$.
\begin{enumerate}
\item[$(\mathrm a)$]
There is an element $\beta$ of $A^*$ such that
\begin{equation}\label{E:alphabet}
\alpha=\sum_{x\in X}q^\beta_xx^f \,.
\end{equation}
\item[$(\mathrm b)$]
The equality
$$
H(\alpha)=-\sum_{x\in X}q^\beta_x\log q^\beta_x
$$
holds.
\item[$(\mathrm c)$]
Amongst all probability distributions $p_x$ on $X$ for which \eqref{E:alphapax} holds, the Gibbs distribution $q^\beta_x$ is the unique one for which the supremum $H(\alpha)$ of \eqref{E:entalpha} is attained.
\end{enumerate}
\end{theorem}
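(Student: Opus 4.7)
The plan is to treat this as a constrained optimization problem solved by Lagrange multipliers, where the dual variables will be shown to encode the required barycentric homomorphism $\beta\in A^*$.

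First I would set up the feasible region. Because $A$ is generated by $Xf$, the valuation extends to a surjective barycentric homomorphism $\overline f\colon XB\to A$ from the free barycentric algebra (the simplex of probability distributions on $X$, by \S\ref{SSS:FreeBAlg}) onto $A$, so the fiber $\overline f^{-1}(\alpha)$ is nonempty, compact and convex. The entropy $(p_x)\mapsto-\sum_x p_x\log p_x$ is strictly concave and continuous, hence attains its supremum at a unique point $(q_x)$ on $\overline f^{-1}(\alpha)$. This already gives uniqueness in part~(c), conditional on identifying $(q_x)$ with a Gibbs distribution.

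Second, I would derive the Gibbs form. Identifying $A$ with a convex subset of its affine hull (which is obtained by quotienting out cancellativity, with relations only tightening the constraints but not disturbing the argument), I would write the constraint $\alpha=\sum_xp_xx^f$ componentwise in that affine hull and append the normalization $\sum_xp_x=1$. The interior Lagrangian
\[
L=-\sum_xp_x\log p_x-\lambda\Big(\sum_xp_x-1\Big)-\mu\cdot\Big(\alpha-\sum_xp_xx^f\Big)
\]
yields $\partial L/\partial p_x=0\iff p_x=\exp(-1-\lambda-\mu\cdot x^f)$, which after enforcing normalization becomes $q_x=e^{-\mu\cdot x^f}/Z$, i.e.\ the form \eqref{E:GibsDist} with $x^f\beta:=\mu\cdot x^f$. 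The affine-linear functional $\mu\cdot(-)$ restricts to a barycentric homomorphism on $\braket{Xf}=A$, giving $\beta\in\mathbf B(A,\mathbb R)\subseteq A^*$ and establishing part~(a). Part~(b) then reduces to the direct identity
$-\sum_xq^\beta_x\log q^\beta_x=\sum_xq^\beta_x\bigl(x^f\beta+\log Z(\beta)\bigr)=\log Z(\beta)+\sum_xq^\beta_xx^f\beta$,
whose right-hand side coincides with $H(\alpha)$ by the complementary slackness identity satisfied at the optimum.

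The main obstacle will be boundary cases in which some $q^\beta_x=0$, equivalently $x^f\beta=\infty$: the naive Lagrangian computation assumes an interior solution and the log term has an unbounded subdifferential at $0$. This is precisely why the codomain in the definition $A^*=\mathbf B(A,\mathbb R^\infty)$ must be the extended reals of Example~\ref{X:ExtReals}, whose sink structure $x\infty\,\underline p=\infty$ lets the limit value $x^f\beta=\infty$ be absorbed barycentrically while respecting $e^{-\infty}=0$. I would handle this by restricting to the support $X_0=\set{x\in X\mid q_x>0}$, running the interior argument there to produce a finite functional $\mu$ on $\braket{X_0f}$, and then showing that $\mu$ extends to all of $A$ by setting $x^f\beta=\infty$ for $x\notin X_0$ — using Definition~\ref{D:SbWaSink}(c) to confirm that $\set\infty$ is a sink, so that the pointwise operations of Proposition~\ref{P:pontwise} on the composite function remain compatible with the barycentric structure. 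Reconciling the KKT analysis on the support with the extension to the sink is the delicate step; once it is in place, parts~(a)--(c) follow in sequence.
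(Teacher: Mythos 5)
You should first be aware that the paper does not prove Theorem~\ref{T:Gibalpha} at all: it is imported verbatim from \cite[Th.~IX.9.8.1]{Modes}, so your argument can only be measured against that source, not against anything in this text. The Lagrange-multiplier/maximum-entropy route you take is the natural one, and in the case that the rest of the paper actually uses --- $A$ a polytope, hence a \emph{cancellative} barycentric algebra embedded in affine space --- it is essentially sound: there the fiber $\overline f^{-1}(\alpha)$ really is the intersection of the simplex $XB$ with an affine flat, hence compact and convex, strict concavity of the entropy gives the uniqueness in (c), and the interior stationarity computation together with your support-restriction device for vanishing coordinates produces the potential $\beta$ with the $\infty$ values living in the sink of $\mathbb R^\infty$.

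The gap is that the theorem is asserted for an \emph{arbitrary} barycentric algebra $A$ generated by $Xf$, and there both of your structural claims fail. First, the fiber need not be compact: take $A=\set{0,1}$ with the semilattice structure $xy\,\underline p=x\cdot y$ of Example~\ref{X:SemiLatt}, $X=\set{a,b}$, $a^f=0$, $b^f=1$. Then $\overline f^{-1}(0)=\set{(p_a,p_b)\mid p_a>0}$ is convex but not closed, so ``continuous function on a compact set'' does not deliver attainment of the supremum \eqref{E:entalpha}. Second, passing to the cancellative quotient does not \emph{tighten} the constraint $\alpha=\sum_xp_xx^f$ --- it loosens it, because identifying elements of $A$ enlarges the set of distributions representing $\alpha$. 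In the same example the cancellative reflection of $A$ is a single point, the quotient feasible set is the whole simplex, and its entropy maximizer $\left(\tfrac12,\tfrac12\right)$ does not represent $\alpha=1$, whose true fiber is the singleton $\set{(0,1)}$. Consequently your Lagrangian is optimizing over the wrong set precisely when $A$ is non-cancellative, and the support $X_0$ on which you propose to rerun the interior argument cannot be identified before the true maximizer is known. This is exactly the regime that the $\mathbb R^\infty$-valued potentials are designed to capture; closing the gap requires the structure theory of fibers of barycentric homomorphisms (semilattice decompositions of barycentric algebras) as developed in \cite[Ch.~IX]{Modes}, not an affine-hull reduction.
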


\begin{example}
Consider the insertion $j\colon\set{0,1}\hookrightarrow[0,1]$ of the endpoints into the closed unit interval. Consider the potentials
$
\beta\colon 0\mapsto a\,,\ 1\mapsto b
$
for $a,b\in\mathbb R$. Then as instances of \eqref{E:alphabet}, we have
$$
\sum_{x\in\set{0,1}}q^\beta_xx^j=\frac{e^{-b}}{e^{-a}+e^{-b}}\,.
$$
If $a=0\ll b$, we get close to $0$. If $a$ and $b$ are comparable, we are somewhere in the middle. If $b=0\ll a$, we get close to $1$. For the endpoints of the interval, see Remark~\ref{R:GibsDu2A}.
\end{example}

\begin{definition}\label{D:Gibalpha}
Within the context of Theorem~\ref{T:Gibalpha}, the non-negative real numbers $q^\beta_x$ are known as the \emph{Gibbs coordinates} of the element $\alpha$ of $A$. We then set $q^\beta_x=q^\beta(\alpha,x)$, following \eqref{E:kernelfn}.
\end{definition}

The relation \eqref{E:alphabet} may be rewritten as
$$
\alpha=\sum_{x\in X}q^\beta(\alpha,x)x^f
$$
using the notation of Definition~\ref{D:Gibalpha} when we wish to emphasize the role of $q^\beta(\alpha,x)$ as a kernel function.

\subsection{Potentials and gauge transformations}

Appearing in the context of Theorem~\ref{T:Gibalpha}, the elements $\beta$ of $A^*$ are known as \emph{potentials}. Although the Gibbs distribution $q^\beta(\alpha,x)$ is uniquely determined by $\alpha$, the potential $\beta$ is not.

For the following proposition, extend the addition $+$ of real numbers to a binary operation on $\mathbb R^\infty$ by declaring $r+\infty=\infty=\infty+r$ for all $r\in\mathbb R^\infty$.

\begin{proposition}\label{P:GaugeTrs}
Let $k$ be a real number. Suppose that $A$ is a barycentric algebra generated by the image of a valuation function $f\colon X\to A$ with finite domain. Let $\beta$ be an element of $A^*$.
\begin{enumerate}
\item[$(\mathrm a)$]
The function
$$
\beta+k\colon A\to\mathbb R^\infty;\alpha\mapsto\alpha\beta+k
$$
is a barycentric homomorphism.
\item[$(\mathrm b)$]
Under
$$
k\colon\beta\mapsto\beta+k\,,
$$
the additive group $(\mathbb R,+,0)$ of real numbers acts on $A^*$.
\item[$(\mathrm c)$]
The Gibbs distributions $q^\beta_x$ and $q^{\beta+k}_x$ on $X$ coincide for each $\beta\in A^*$ and $k\in\mathbb R$.
\end{enumerate}
\end{proposition}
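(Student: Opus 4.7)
The plan is to handle the three parts in sequence, with part (a) as the main computational content, (b) as a quick group-axiom verification that rests on (a), and (c) as a one-line cancellation.

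For part (a), I would verify that $\beta+k$ respects each binary operation $\underline p$ for $p \in I^\circ$. Fix $\alpha_1, \alpha_2 \in A$ and split into two cases according to whether the values $\alpha_i\beta$ lie in $\mathbb R$ or equal $\infty$. In the finite case, one exploits that $\beta$ is a homomorphism to compute
\begin{align*}
(\alpha_1\alpha_2\,\underline p)(\beta+k)
&= (\alpha_1\alpha_2\,\underline p)\beta + k
= \alpha_1\beta\cdot(1-p) + \alpha_2\beta\cdot p + k \\
&= (\alpha_1\beta + k)(1-p) + (\alpha_2\beta + k)p
= \alpha_1(\beta+k)\,\alpha_2(\beta+k)\,\underline p,
\end{align*}
using the elementary identity $k = k(1-p) + kp$ to distribute the constant. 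If some $\alpha_i\beta = \infty$, then $\alpha_i(\beta+k) = \infty + k = \infty$ by the convention extending $+$ to $\mathbb R^\infty$, and because $\set\infty$ is a sink of $(\mathbb R^\infty,I^\circ)$ (Example with the extended reals), both sides of the homomorphism equation reduce to $\infty$. The hypothesis that $A$ is generated by the image of $f$ with finite domain is not actually needed for part (a); it becomes relevant only in (c), where it is used implicitly to ensure $Z(\beta)$ is a finite, nonzero real number so that the Gibbs weights in \eqref{E:GibsDist} are well-defined.

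For part (b), I would first note that (a) guarantees $\beta + k \in A^* = \mathbf B(A,\mathbb R^\infty)$ whenever $\beta \in A^*$ and $k \in \mathbb R$, so the assignment $k \colon \beta \mapsto \beta + k$ is a well-defined map $A^* \to A^*$. The identity axiom $\beta + 0 = \beta$ is immediate from the pointwise definition, and the composition axiom $(\beta + k_1) + k_2 = \beta + (k_1 + k_2)$ is a pointwise consequence of the associativity of $+$ on $\mathbb R^\infty$ restricted to finite shifts (both sides send $\alpha \in A$ to $\alpha\beta + k_1 + k_2$ when $\alpha\beta \in \mathbb R$, and to $\infty$ otherwise). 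Thus $(\mathbb R,+,0)$ acts on $A^*$.

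For part (c), I would substitute directly into \eqref{E:GibsDist}. Writing $x^f\beta$ for $xf\beta$, one has $x^f(\beta+k) = x^f\beta + k$, so
\[
e^{-x^f(\beta+k)} = e^{-x^f\beta}\cdot e^{-k}\,,
\quad
Z(\beta+k) = \sum_{x\in X} e^{-x^f\beta}\cdot e^{-k} = e^{-k} Z(\beta).
\]
The factor $e^{-k}$ cancels between numerator and denominator, yielding $q^{\beta+k}_x = q^\beta_x$ for every $x \in X$.

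The main obstacle, such as it is, is the bookkeeping around the value $\infty$ in part (a): one must confirm that the definitions $e^{-\infty}=0$, $\infty + k = \infty$, and the sink property $x\infty\underline p = \infty$ fit together consistently so that the homomorphism property is preserved and so that (c) still makes sense whenever $Z(\beta)$ is positive and finite (which is automatic since $X$ is finite and each $e^{-x^f\beta} \in [0,\infty)$, with at least one summand positive whenever some $x^f\beta < \infty$).
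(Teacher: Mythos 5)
Your proof is correct and follows essentially the same route as the paper's: the same pointwise computation distributing $k=kp'+kp$ for part (a), the same associativity check for (b), and the same $e^{-k}$ cancellation for (c). The only difference is that you explicitly treat the case $\alpha_i\beta=\infty$ via the sink property, which the paper leaves implicit in its formal computation; this is a welcome extra precaution rather than a divergence.
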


\begin{proof}
(a)
For $p\in I^\circ$ and $\alpha_i\in A$, the relation $(\alpha_1p'+\alpha_2p)(\beta+k)=
\alpha_1\beta p'+\alpha_2\beta p+kp'+kp
=\alpha_1(\beta+k)p'+\alpha_2(\beta+k)p
$
holds, as required.
\vskip 2mm
\noindent
(b)
Note
$
\alpha\left((\beta+k_1)+k_2\right)
=\alpha\beta+k_1+k_2
=\alpha\left(\beta+(k_1+k_2)\right)
$
for $\alpha\in A$, $\beta\in A^*$, and $k_i\in\mathbb R$.
\vskip 2mm
\noindent
(c)
Note $Z(\beta+k)=e^{-k}Z(\beta)$, and $e^{-xf(\beta+k)}=e^{-k}e^{-xf\beta}$ for $x\in X$. Thus, comparing the expressions \eqref{E:GibsDist} for $q^{\beta+k}_x$ and $q^{\beta}_x$, the extra factor of $e^{-k}$ in the numerator and denominator for $q^{\beta+k}_x$ cancels out, reducing to $q^{\beta}_x$.
\end{proof}

\begin{definition}\label{D:GaugeTrs}
In the context of Proposition~\ref{P:GaugeTrs}(b), the actions of real numbers are described as \emph{gauge transformations}, and the additive group $(\mathbb R,+,0)$ is called the \emph{gauge group}.
\end{definition}

\begin{proposition}\label{P:GaGrOrCs}
For each extended real number $r\in\mathbb R^\infty$, write $\beta_r$ for the constant homomorphism in $\mathbf B(A,\mathbb R^\infty)$ with image $\set r$. Then
\begin{equation}\label{E:cstggorb}
\set{\beta_\infty}
\
\mbox{ and }
\
\set{\beta_k|k\in\mathbb R}
\end{equation}
are the orbits of the gauge group $\mathbb R$ on the set $\set{\beta_r|r\in\mathbb R^\infty}$ of constant potentials.
\end{proposition}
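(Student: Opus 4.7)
The plan is to show that each gauge transformation, when restricted to the set of constant homomorphisms, takes $\beta_r$ to $\beta_{r+k}$ (with $+$ denoting the extended addition on $\mathbb{R}^\infty$), and then to read off the orbit decomposition directly from the behaviour of that addition. Before doing so, I would first confirm that, for any $r \in \mathbb{R}^\infty$, the constant function $\beta_r \colon A \to \mathbb{R}^\infty;\ \alpha \mapsto r$ really is a barycentric homomorphism: this is immediate from idempotence \eqref{E:idemptnc} in $\mathbb{R}^\infty$, since $(xy\underline{p})^{\beta_r} = r = rr\,\underline{p} = x^{\beta_r}y^{\beta_r}\,\underline{p}$ for all $x,y \in A$ and $p \in I^\circ$.

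The computation of the action is also short. By the definition of the gauge action from Proposition~\ref{P:GaugeTrs}, for any $\alpha \in A$ and $k \in \mathbb{R}$ one has $\alpha(\beta_r + k) = \alpha\beta_r + k = r + k$, so that $\beta_r + k = \beta_{r+k}$. In particular, the set $\{\beta_r \mid r \in \mathbb{R}^\infty\}$ of constant potentials is invariant under the gauge group, and the map $r \mapsto \beta_r$ from $\mathbb{R}^\infty$ onto this set intertwines the extended addition on $\mathbb{R}^\infty$ with the gauge action.

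It then remains only to split into two cases based on the extended-addition convention $\infty + k = \infty$. If $r = \infty$, then $\beta_\infty + k = \beta_\infty$ for every $k \in \mathbb{R}$, which gives the singleton orbit $\{\beta_\infty\}$. If instead $r \in \mathbb{R}$, then $r + k \in \mathbb{R}$, and the restricted action agrees (via $\beta_k \leftrightarrow k$) with the regular translation action of $(\mathbb{R},+,0)$ on itself, which is transitive; hence $\{\beta_k \mid k \in \mathbb{R}\}$ is a single orbit. There is no substantive obstacle here, as the argument is essentially an unwinding of the definitions; the only point requiring any care is the asymmetric behaviour of $\infty$ under the extended addition, which is precisely what separates the two orbits.
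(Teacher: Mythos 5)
Your argument is correct: the paper states this proposition without proof, treating it as an immediate consequence of Proposition~\ref{P:GaugeTrs} and the extended-addition convention $\infty+k=\infty$, and your unwinding --- constancy is preserved since $\beta_r+k=\beta_{r+k}$, giving the singleton orbit at $\infty$ and the transitive translation action on the real constants --- is exactly the intended reasoning. The one detail worth keeping (which you do include) is the check that $\beta_\infty$ is genuinely a homomorphism, which rests on the defining clause $x\infty\,\underline p=\infty$ of Example~\ref{X:ExtReals}.
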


\begin{remark}
The interpretation of $q_x^\infty$ as the uniform distribution on $X$ is discussed in \cite[Rem.~9.6.8]{Modes}.
\end{remark}

\subsection{The Gibbs dual}

Let $A$ be a barycentric algebra.

\begin{definition}
The \emph{Gibbs dual} of $A$ is
the set
$$
A^*_{/\mathbb R}:=\set{\beta+\mathbb R|\beta\in A^*}
$$
of orbits of $A^*$ under the action of the gauge group $(\mathbb R,+,0)$.
\end{definition}

\begin{proposition}
The surjective map
$$
\varepsilon_{/\mathbb R}\colon A^*\to A^*_{/\mathbb R};
\beta\mapsto\beta+\mathbb R
$$
is a well-defined barycentric homomorphism.
\end{proposition}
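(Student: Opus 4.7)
The plan is to show that the gauge group orbit relation on $A^*$ is a congruence with respect to the pointwise barycentric operations established in Proposition~\ref{P:BaHomSet}, so that $A^*_{/\mathbb R}$ inherits a unique barycentric algebra structure for which $\varepsilon_{/\mathbb R}$ is a homomorphism. Surjectivity is immediate from the definition of $A^*_{/\mathbb R}$ as the set of orbits.

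First I would unpack the target: two potentials $\beta,\beta'\in A^*$ satisfy $\beta+\mathbb R=\beta'+\mathbb R$ iff $\beta'=\beta+k$ for some $k\in\mathbb R$, where addition by a real scalar is the gauge transformation defined in Proposition~\ref{P:GaugeTrs}(a). Next, the core step is to verify that this relation respects every operation $\underline p$ with $p\in I^\circ$. Suppose $\beta_1'=\beta_1+k_1$ and $\beta_2'=\beta_2+k_2$. For any $\alpha\in A$, the pointwise operation \eqref{E:pontwise} gives
\begin{align*}
\alpha^{\beta_1'\beta_2'\underline p}
&= (\alpha\beta_1+k_1)p' + (\alpha\beta_2+k_2)p \\
&= \bigl(\alpha\beta_1\,p'+\alpha\beta_2\,p\bigr) + (k_1p'+k_2p)
= \alpha^{\beta_1\beta_2\underline p} + (k_1p'+k_2p),
\end{align*}
so $\beta_1'\beta_2'\underline p = \beta_1\beta_2\underline p + (k_1p'+k_2p)$, which lies in the same gauge orbit as $\beta_1\beta_2\underline p$ since $k_1p'+k_2p\in\mathbb R$. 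This is the main (though routine) computation; the only subtlety is to keep the pointwise convention of \eqref{E:pontwise} and the additive convention of Proposition~\ref{P:GaugeTrs} straight.

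With the congruence property in hand, I would define the barycentric operation on $A^*_{/\mathbb R}$ by
$$
(\beta_1+\mathbb R)(\beta_2+\mathbb R)\underline p := \beta_1\beta_2\underline p + \mathbb R,
$$
which is well-defined by the above. The axioms \eqref{E:idemptnc}--\eqref{E:skewasoc} on $A^*_{/\mathbb R}$ are inherited representative-wise from $A^*$, so $A^*_{/\mathbb R}$ becomes a barycentric algebra. By construction, $\varepsilon_{/\mathbb R}$ satisfies $\varepsilon_{/\mathbb R}(\beta_1\beta_2\underline p) = \varepsilon_{/\mathbb R}(\beta_1)\varepsilon_{/\mathbb R}(\beta_2)\underline p$, which is exactly the homomorphism condition \eqref{E:BaryAHom}. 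I do not anticipate any real obstacle here: the only point needing care is the bookkeeping that $k_1p'+k_2p$ is again a real scalar (not $\infty$), which uses $p\in I^\circ$ and $k_1,k_2\in\mathbb R$; the extended value $\infty$ never enters on the gauge side.
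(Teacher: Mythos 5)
Your proof is correct and follows essentially the same route as the paper: the key step in both is the pointwise computation showing $\beta_1'\beta_2'\underline p=\beta_1\beta_2\underline p+(k_1p'+k_2p)$, i.e.\ that the gauge orbit relation is a congruence for the operations of Proposition~\ref{P:BaHomSet}. The paper simply writes the shift as $k_1k_2\underline p$ and leaves the quotient-structure bookkeeping implicit, which you spell out explicitly.
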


\begin{proof}
Suppose that $\beta'_i=\beta_i+k_i$ for $i=1,2$, with $\beta_i,\beta'_i\in A^*$ and $k_i\in\mathbb R$. Then for each element $\alpha\in A$ and $p\in I^\circ$, we have
\begin{align*}
\alpha(\beta'_1\beta'_2\underline p)
&
=\alpha\beta'_1p'+\alpha\beta'_2p
=\alpha(\beta_1+k_1)p'+\alpha(\beta_2+k_2)p
\\
&
=\alpha\beta_1p'+k_1p'+\alpha\beta_2p+k_2p
=\alpha\beta_1p'+\alpha\beta_2p+k_1p'+k_2p
\\
&
=\alpha(\beta_1\beta_2\underline p)+k_1k_2\underline p\,,
\end{align*}
so that $\beta'_1\beta'_2\underline p=\beta_1\beta_2\underline p+k_1k_2\underline p$
as required.
\end{proof}

Now suppose that $A$ is generated by the image of a valuation function $f\colon X\to A$ from a finite set $X$ of microstates.

\begin{proposition}\label{P:GibsDu2A}
There is a well-defined map
\begin{equation}\label{E:exA*ftoA}
q_f\colon A^*_{/\mathbb R}\to A;\beta+\mathbb R\mapsto\sum_{x\in X}q^\beta_xx^f
\end{equation}
from the Gibbs dual of $A$ to $A$.
\end{proposition}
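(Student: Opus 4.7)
The plan is to verify two assertions: that for each $\beta \in A^*$ the expression $\sum_{x \in X} q^\beta_x x^f$ is a bona fide element of $A$; and that it depends only on the gauge orbit $\beta + \mathbb R$, not on the chosen representative. The first point is essentially bookkeeping, while the second is a direct application of a result already established.

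For the first point, I would note that the Gibbs weights from \eqref{E:GibsDist} are non-negative (using the convention $e^{-\infty} = 0$) and sum to $Z(\beta)/Z(\beta) = 1$, so they form a probability distribution on the finite set $X$. Since $A$ is a barycentric algebra generated by the image $Xf$, any such distribution yields a well-defined barycentric combination of the $x^f$, assembled as in the normal form \eqref{E:deltasqs}--\eqref{E:deltasps} of \S\ref{SSS:PrDiWeAv} (dropping zero weights, and collapsing any weight equal to one via idempotence \eqref{E:idemptnc}). Thus $\sum_{x \in X} q^\beta_x x^f \in A$.

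For the second point, which carries the content of the proposition, I would suppose that $\beta_1 + \mathbb R = \beta_2 + \mathbb R$, so that $\beta_2 = \beta_1 + k$ for some $k \in \mathbb R$, and then invoke Proposition~\ref{P:GaugeTrs}(c) verbatim: it asserts that $q^{\beta_1}_x = q^{\beta_1+k}_x = q^{\beta_2}_x$ for every $x \in X$. Termwise equality of the weights immediately yields $\sum_{x \in X} q^{\beta_1}_x x^f = \sum_{x \in X} q^{\beta_2}_x x^f$, so the prescription $\beta + \mathbb R \mapsto \sum_{x\in X} q^\beta_x x^f$ depends only on the orbit.

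I do not anticipate a genuine obstacle here. The cancellation that makes gauge-invariance work — namely that the factor $e^{-k}$ introduced by $\beta \mapsto \beta + k$ appears identically in the numerator $e^{-xf(\beta+k)} = e^{-k} e^{-xf\beta}$ and the denominator $Z(\beta+k) = e^{-k} Z(\beta)$ of \eqref{E:GibsDist} — is exactly what Proposition~\ref{P:GaugeTrs}(c) has already isolated. The only minor caveat concerns potentials $\beta$ taking the value $\infty$ on some microstates: those microstates receive weight zero and drop out of the barycentric combination, which is harmless as long as $Z(\beta) > 0$, i.e.\ at least one microstate has finite potential.
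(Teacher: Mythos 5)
Your proof is correct and takes essentially the same route as the paper, whose entire argument is the single sentence that well-definition follows from Proposition~\ref{P:GaugeTrs}(c). Your additional bookkeeping (that the Gibbs weights form a probability distribution, and the caveat about potentials with value $\infty$, handled in the paper by reference to the convention of \cite[Def'n.~9.6.1]{Modes}) is sound but not part of the paper's stated proof.
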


\begin{proof}
The well-definition of \eqref{E:exA*ftoA} follows from Proposition~\ref{P:GaugeTrs}(c).
\end{proof}

\begin{remark}\label{R:GibsDu2A}
(a)
In general, \eqref{E:exA*ftoA} is not a barycentric homomorphism. For example, consider the insertion $j\colon\set{0,1}\hookrightarrow[0,1]$ of the endpoints into the closed unit interval. Consider the potentials
\begin{align*}
\beta\colon0\mapsto0\,,\ ]0,1]\to\set\infty
\
\mbox{ and }
\
\beta'\colon[0,1[\to\set\infty\,,\ 1\mapsto0
\end{align*}
with
\begin{align*}
q_j\colon(\beta+\mathbb R)\mapsto 0\,,
\quad
q_j\colon(\beta'+\mathbb R)\mapsto1
\
\mbox{ and }
\
q_j\colon(\beta\beta'\underline p+\mathbb R)\mapsto\tfrac12
\end{align*}
for $p\in I^\circ$, since $\beta\beta'\underline p=\beta_\infty$. (Compare \cite[Def'n.~9.6.1]{Modes} for the relevant convention.) If $p\ne\frac12$, we have
$$
(\beta+\mathbb R)^{q_j}(\beta'+\mathbb R)^{q_j}\underline p
=01\underline p=p
\ne\tfrac12=
(\beta\beta'\underline p+\mathbb R)^{q_j} \,.
$$
\vskip 2mm
\noindent
(b)
Note that the two elements \eqref{E:cstggorb} of the Gibbs dual have the same image under \eqref{E:exA*ftoA}. (Again, compare \cite[Def'n.~9.6.1]{Modes} for the relevant convention.)
\end{remark}

\section{Wachspress coordinates}\label{S:Wchspres}

\subsection{Volumetric coordinates in a simplex}\label{SS:volmtric}

The (unique) barycentric coordinates in a simplex may be given in terms of relative (hyper)volumes. These barycentric coordinates extend to affine coordinates for the entire affine space containing the simplex.

Consider a simplex of geometric dimension $n$ that is spanned by the ordered set $\mathbf v_0,\dots,\mathbf v_{i-1},\mathbf v_i,\mathbf v_{i+1},\dots,\mathbf v_n$ in $n$-dimensional Euclidean space. Here, the cyclic order is uniquely determined by topological orientation. The signed (hyper)volume of the simplex is
\begin{equation}\label{E:hypvolv0}
\tfrac1{n!}
\det[
\mathbf v_1-\mathbf v_0,
\dots,
\mathbf v_{i-1}-\mathbf v_0,
\mathbf v_i-\mathbf v_0,
\mathbf v_{i+1}-\mathbf v_0,
\dots,
\mathbf v_n-\mathbf v_0
]
\end{equation}
or
\begin{equation}\label{E:hypvhomg}
\tfrac1{n!}
\det[
1\oplus\mathbf v_0,
1\oplus\mathbf v_1,
\dots,
1\oplus\mathbf v_{i-1},
1\oplus\mathbf v_i,
1\oplus\mathbf v_{i+1},
\dots,
1\oplus\mathbf v_n
]
\end{equation}
in a more symmetric format that is related to homogeneous coordinates in projective geometry. (For the case $n=2$, see the subsequent section.)

Now, for a point $\mathbf x$ of the simplex, define
\begin{equation}\label{E:volmtrpi}
p_i=\frac
{
\det[
\mathbf v_1-\mathbf v_0,
\dots,
\mathbf v_{i-1}-\mathbf v_0,
\mathbf x-\mathbf v_0,
\mathbf v_{i+1}-\mathbf v_0,
\dots,
\mathbf v_n-\mathbf v_0
]
}
{
\det[
\mathbf v_1-\mathbf v_0,
\dots,
\mathbf v_{i-1}-\mathbf v_0,
\mathbf v_i-\mathbf v_0,
\mathbf v_{i+1}-\mathbf v_0,
\dots,
\mathbf v_n-\mathbf v_0
]
}
\,.
\end{equation}
Then for $i>0$, the $p_i$ are the (unique) barycentric coordinates of $\mathbf x$ with respect to $\mathbf v_i$ in the simplex, so that
\begin{equation}\label{E:volmtric}
\mathbf x=\sum_{i=0}^np_i\mathbf v_i
\end{equation}
with
$$
p_0=1-\sum_{i=1}^np_i\,.
$$
The convex combination \eqref{E:volmtric} based on \eqref{E:volmtrpi} is said to give the \emph{volumetric} coordinates for $\mathbf x$ in terms of the vertices $\mathbf v_i$.

The formula \eqref{E:volmtric} extends seamlessly to points $\mathbf x$ that are outside the simplex. Here, certain of the numerators in \eqref{E:volmtrpi} will become negative, and the expression \eqref{E:volmtric} expresses $\mathbf x$ as an affine combination of the vertices $\mathbf v_i$ that is not a convex combination.

\subsubsection{Areal coordinates}\label{SSS:Aov0v1v2}

The volumetric coordinates specialize, in the case of a triangle, to the classical \emph{areal coordinates} \cite{Muggeridge}. Here, it is convenient to set
\begin{equation}\label{E:Aov0v1v2}
A\left(\mathbf v_0,\mathbf v_1,\mathbf v_2\right)
=
\tfrac12\det[
\mathbf v_1-\mathbf v_0,
\mathbf v_2-\mathbf v_0
]
=
\frac12
\begin{vmatrix}
1 &v_{00} &v_{01}\\
1 &v_{10} &v_{11}\\
1 &v_{20} &v_{21}\\
\end{vmatrix}
\end{equation}
for the signed area of a triangle in the Euclidean plane whose vertices are
$
\mathbf v_0=
\begin{bmatrix}
v_{00} &v_{01}
\end{bmatrix},
\mathbf v_1=
\begin{bmatrix}
v_{10} &v_{11}
\end{bmatrix}
\mbox{ and }
\mathbf v_2=
\begin{bmatrix}
v_{20} &v_{21}
\end{bmatrix}
$
in counterclockwise order,
specializing \eqref{E:hypvolv0} and \eqref{E:hypvhomg} to obtain the respective equations in \eqref{E:Aov0v1v2}.

The following proposition recalls the convexity of the area $A\left(\mathbf v_0,\mathbf v_1,\mathbf v_2\right)$ with respect to variation of the vertex $\mathbf v_0$.

\begin{proposition}\label{P:Aov0v1v2}
Consider vectors
$$
\mathbf v_0=
\begin{bmatrix}
v_{00} &v_{01}
\end{bmatrix},
\mathbf v_0'=
\begin{bmatrix}
v_{00}' &v_{01}'
\end{bmatrix},
\mathbf v_1=
\begin{bmatrix}
v_{10} &v_{11}
\end{bmatrix}
\mbox{ and }
\mathbf v_2=
\begin{bmatrix}
v_{20} &v_{21}
\end{bmatrix}
$$
in the plane. Then
$$
A\big((1-p)\mathbf v_0+p\mathbf v_0',\mathbf v_1,\mathbf v_2\big)
=
(1-p)
A\left(\mathbf v_0,\mathbf v_1,\mathbf v_2\right)
+
p
A\left(\mathbf v_0',\mathbf v_1,\mathbf v_2\right)
$$
for each real number $p$.
\end{proposition}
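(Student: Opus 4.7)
The plan is to exploit the $3 \times 3$ homogeneous-coordinates representation of $A(\mathbf v_0,\mathbf v_1,\mathbf v_2)$ from the second equality in \eqref{E:Aov0v1v2}, and apply multilinearity of the determinant in the row corresponding to $\mathbf v_0$. The attraction of this form over the alternative $\tfrac{1}{2}\det[\mathbf v_1 - \mathbf v_0,\mathbf v_2-\mathbf v_0]$ is that $\mathbf v_0$ appears in only one row, which sidesteps any apparent quadratic dependence on $\mathbf v_0$.

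First I would substitute the vertex $(1-p)\mathbf v_0 + p \mathbf v_0'$ into the first row of the $3 \times 3$ determinant, obtaining the row
$$\bigl[\,1,\ (1-p)v_{00} + p v_{00}',\ (1-p)v_{01} + p v_{01}'\,\bigr].$$
The essential observation, and really the whole content of the proposition, is that the leading entry $1$ can itself be written as $(1-p)\cdot 1 + p\cdot 1$, so that the entire first row decomposes as
$$(1-p)\bigl[1, v_{00}, v_{01}\bigr] + p\bigl[1, v_{00}', v_{01}'\bigr].$$
Row-linearity of the determinant then splits the $3\times 3$ determinant into the sum of two determinants, each with leading row $[1, v_{00}, v_{01}]$ or $[1, v_{00}', v_{01}']$ and the same second and third rows corresponding to $\mathbf v_1$ and $\mathbf v_2$. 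Multiplying through by $\tfrac{1}{2}$ and recognizing each summand via \eqref{E:Aov0v1v2} yields the claimed identity.

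There is no real obstacle here; the only subtlety worth flagging is the role of the column of $1$'s. Affine (as opposed to merely linear) combinations are respected precisely because the constant row $(1-p)+p = 1$ is preserved, which is the reason the homogeneous form is the right form to work with. The argument also shows that the same proof yields affinity in $\mathbf v_1$ and in $\mathbf v_2$ separately, by applying row-linearity to the corresponding row.
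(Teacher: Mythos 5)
Your proposal is correct and takes essentially the same approach as the paper: the paper's proof performs the Laplace expansion of the homogeneous $3\times 3$ determinant of \eqref{E:Aov0v1v2} across its first row, thereby exhibiting $A(\mathbf v_0,\mathbf v_1,\mathbf v_2)$ as an affine function of $\mathbf v_0$, which is just another packaging of the row-linearity argument (together with your observation that $1=(1-p)+p$) that you use.
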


\begin{proof}
The result follows by consideration of the Laplace expansion
$$
A\left(\mathbf v_0,\mathbf v_1,\mathbf v_2\right)=
\frac12
\left(
\begin{vmatrix}
v_{10} &v_{11}\\
v_{20} &v_{21}
\end{vmatrix}
-v_{00}
\begin{vmatrix}
1 &v_{11}\\
1 &v_{21}
\end{vmatrix}
+v_{01}
\begin{vmatrix}
1 &v_{10}\\
1 &v_{20}
\end{vmatrix}
\right)
$$
of the determinant of \eqref{E:Aov0v1v2} across its first row.
\end{proof}

\subsection{Planar Wachspress coordinates}\label{SS:WxprsPlr}

Throughout this section, we will consider a convex polygon $\Pi$ presented as the convex hull of the counter-clockwise ordered sequence $\mathbf v_1,\dots,\mathbf v_n$ of extreme points located around its boundary.

\subsubsection{Planar Wachspress coordinates inside a polygon}

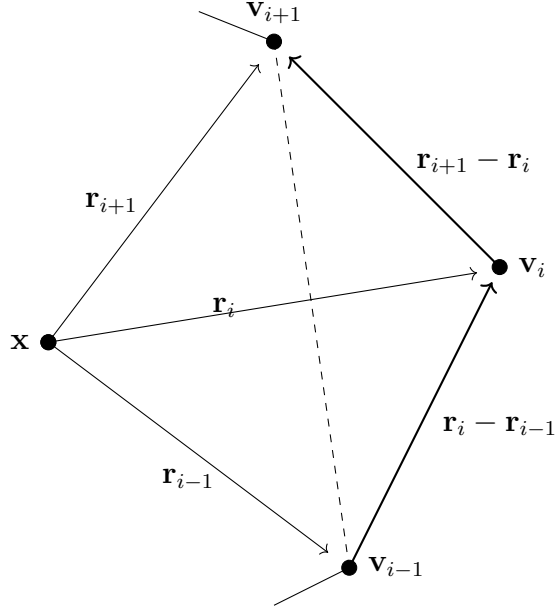
\begin{figure}[hbt]
  \centering
  \begin{tikzpicture}
    \coordinate (Origin)   at (0,0);
    \coordinate (XAxisMin) at (-5,0);
    \coordinate (XAxisMax) at (5,0);
    \coordinate (YAxisMin) at (0,-5);
    \coordinate (YAxisMax) at (0,5);
        \clip (-5,-5) rectangle (5.1cm,5.1cm);
\node[label=right:{$\mathbf v_{i-1}$}, draw, circle, inner sep=2pt, fill] at (1,-4) {};
\node[label=right:{$\mathbf v_{i}$}, draw, circle, inner sep=2pt, fill] at (3,0) {};
\node[label=above:{$\mathbf v_{i+1}$}, draw, circle, inner sep=2pt, fill] at (0,3) {};
\node[label=left:{$\mathbf x$},draw, circle, inner sep=2pt, fill] at (-3,-1){};
\node[draw, circle, inner sep=2pt, fill] at (-3,-1){};
\draw[thick,<-] (2.9,-0.2) -- node[right=4pt]
{$\mathbf{r}_{i}-\mathbf{r}_{i-1}$} (1,-4);
\draw[thick,<-] (0.2,2.8) -- node[right=4pt]
{$\mathbf{r}_{i+1}-\mathbf{r}_{i}$} (3,0);
\draw[thin] (0,-4.5) -- (1, -4);
\draw[->] (-3,-1) -- node[below=4pt] {$\mathbf{r}_{i-1}$} (.72, -3.8);
\draw[->] (-3,-1) -- node[above=4pt,left=6pt] {$\mathbf{r}_{i}$} (2.7,-0.075);
\draw[->] (-3,-1) -- node[left=2pt] {$\mathbf{r}_{i+1}$} (-.2,2.7);
\draw[thin] (0,3) -- (-1,3.4);
\draw[dashed] (1,-4) -- (0,3);
\end{tikzpicture}
\caption{Vectors for planar Wachspress coordinates (cf. \cite[Figure~3]{FloaterWMVC}).}
\label{F:WxprsPlr}
\end{figure}

Consider an interior point $\mathbf x$ of $\Pi$, as depicted in the local fragment of $\Pi$ appearing in Figure~\ref{F:WxprsPlr}. For $1\le i\le n$, define $\mathbf r_i=\mathbf v_i-\mathbf x$, which may be regarded as specifying the extreme points $\mathbf v_i$ with respect to the interior point $\mathbf x$. We refer to the vectors $\mathbf r_i$ as being taken in the \emph{radial} coordinate system. Thus the interior point $\mathbf x$ itself, as the origin of the radial system, has radial vector $\mathbf 0$. The following result presents a simplified version of the discussion provided in \cite[Section~3]{FloaterWMVC} on the basis of \cite{MeBaLeDe}.

\begin{proposition}\label{P:WxprsPlr}
For $1\le i\le n$, define the \emph{Wachspress weights}
\begin{align}
w_i(\mathbf x)
&
=
\frac
{
A\left(
\mathbf v_{i-1},\mathbf v_{i},\mathbf v_{i+1}
\right)
}
{
A\left(
\mathbf v_{i-1},\mathbf v_{i},\mathbf x
\right)
\cdot
A\left(
\mathbf x,\mathbf v_{i},\mathbf v_{i+1}
\right)
}
\label{E:WxprsPlr}
\end{align}
using the signed area notation \eqref{E:Aov0v1v2}. Then the interior point $\mathbf x$ of the polygon $\Pi$ may be expressed as the convex combination
\begin{equation}\label{E:WaConCom}
\mathbf x=\left(\sum_{j=1}^nw_j(\mathbf x)\right)^{-1}\sum_{i=1}^n\mathbf v_iw_i(\mathbf x)
\end{equation}
of the extreme points $\mathbf v_i$ of $\Pi$.
\end{proposition}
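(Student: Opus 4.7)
My strategy is to translate everything into radial coordinates and reduce the claim to the single vector identity $\sum_{i=1}^n w_i(\mathbf{x})\,\mathbf{r}_i = \mathbf{0}$. Indeed, once this identity is established, summing $w_i(\mathbf{x})\mathbf{v}_i = w_i(\mathbf{x})\mathbf{x} + w_i(\mathbf{x})\mathbf{r}_i$ over $i$ gives $\sum_i w_i(\mathbf{x})\mathbf{v}_i = (\sum_j w_j(\mathbf{x}))\,\mathbf{x}$, and dividing through by $\sum_j w_j(\mathbf{x})$ yields \eqref{E:WaConCom}. Positivity and nonvanishing of this normalizer follow immediately from Proposition~\ref{P:Aov0v1v2} style considerations: since $\mathbf{x}$ is interior to a counter-clockwise convex polygon, each of the signed areas $A(\mathbf{v}_{i-1},\mathbf{v}_i,\mathbf{x})$, $A(\mathbf{x},\mathbf{v}_i,\mathbf{v}_{i+1})$, and $A(\mathbf{v}_{i-1},\mathbf{v}_i,\mathbf{v}_{i+1})$ is strictly positive, so every $w_i(\mathbf{x})>0$ and \eqref{E:WaConCom} is a genuine convex combination.

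Introduce the shorthand $[\mathbf{a},\mathbf{b}] := \det[\mathbf{a},\mathbf{b}]$, and set $B_i := [\mathbf{r}_i,\mathbf{r}_{i+1}]$ and $C_i := 2A(\mathbf{v}_{i-1},\mathbf{v}_i,\mathbf{v}_{i+1})$. Translating by $-\mathbf{x}$ and using multilinearity of $\det$, one checks that $2A(\mathbf{v}_{i-1},\mathbf{v}_i,\mathbf{x}) = B_{i-1}$ and $2A(\mathbf{x},\mathbf{v}_i,\mathbf{v}_{i+1}) = B_i$, and the triangle-from-origin formula gives
\begin{equation*}
C_i \;=\; B_{i-1} + B_i - [\mathbf{r}_{i-1},\mathbf{r}_{i+1}]\,.
\end{equation*}
Thus $w_i(\mathbf{x}) = 2C_i/(B_{i-1}B_i)$. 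Since any three vectors in $\mathbb{R}^2$ are linearly dependent, applied to $\mathbf{r}_{i-1},\mathbf{r}_i,\mathbf{r}_{i+1}$ the standard Plücker-style relation becomes
\begin{equation*}
[\mathbf{r}_{i-1},\mathbf{r}_{i+1}]\,\mathbf{r}_i \;=\; B_i\,\mathbf{r}_{i-1} + B_{i-1}\,\mathbf{r}_{i+1}\,.
\end{equation*}

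Combining these two displays, $C_i\,\mathbf{r}_i = (B_{i-1}+B_i)\mathbf{r}_i - (B_i\mathbf{r}_{i-1} + B_{i-1}\mathbf{r}_{i+1})$. Dividing by $B_{i-1}B_i$ and summing cyclically over $i\in\mathbb{Z}/n\mathbb{Z}$,
\begin{equation*}
\tfrac12\sum_{i=1}^n w_i(\mathbf{x})\,\mathbf{r}_i
\;=\; \sum_{i=1}^n\!\left(\tfrac{\mathbf{r}_i}{B_i} + \tfrac{\mathbf{r}_i}{B_{i-1}}\right) - \sum_{i=1}^n\!\left(\tfrac{\mathbf{r}_{i-1}}{B_{i-1}} + \tfrac{\mathbf{r}_{i+1}}{B_i}\right).
\end{equation*}
Reindexing the two terms involving $B_{i-1}$ by $j=i-1$ pairs each summand on the right with an identical summand of opposite sign on the left, so the whole expression collapses to $\mathbf{0}$. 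This establishes $\sum_i w_i(\mathbf{x})\,\mathbf{r}_i = \mathbf{0}$ and completes the proof.

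The proof is essentially bookkeeping; the only real content is the linear-dependence identity in the plane. The main pitfall to watch is index hygiene during the telescoping: the cyclic indexing (with subscripts read modulo $n$) must be consistent throughout, and one should verify at the outset that $B_i\ne 0$ for all $i$ so that the denominators in $w_i$ and in the key identity are legitimate — again guaranteed by $\mathbf{x}$ lying in the interior of a strictly convex polygon.
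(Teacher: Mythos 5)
Your proof is correct and follows essentially the same route as the paper's: both reduce the claim to the radial identity $\sum_{i=1}^n w_i(\mathbf x)\,\mathbf r_i=\mathbf 0$, establish the same local relation at each vertex (you via the planar linear-dependence identity for $\mathbf r_{i-1},\mathbf r_i,\mathbf r_{i+1}$, the paper via the areal coordinatization of $\mathbf x$ in the triangle $\mathbf v_{i-1}\mathbf v_i\mathbf v_{i+1}$ --- the same computation in different clothing), divide by the product of the two adjacent signed areas to recognize $w_i(\mathbf x)$, and telescope the cyclic sum. Your explicit remark that every $w_i(\mathbf x)$ is strictly positive for an interior point, so that \eqref{E:WaConCom} is genuinely a convex combination, is a small but welcome addition that the paper's proof leaves implicit.
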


\begin{proof}
First, note that \eqref{E:WxprsPlr} is well defined. Indeed,
$
A\left(
\mathbf v_{i-1},\mathbf v_{i},\mathbf x
\right)
=0
$
would imply the linear dependence of $\set{\mathbf v_{i-1},\mathbf v_{i},\mathbf x}$, placing $\mathbf x$ on the edge joining $\mathbf v_{i-1}$ to $\mathbf v_{i}$.

Now, in the situation of Figure~\ref{F:WxprsPlr}, the areal coordinatization \eqref{E:volmtric} of $\mathbf x$ with respect to the simplex $\Delta_i$ spanned by $\mathbf v_{i-1},\mathbf v_{i},\mathbf v_{i+1}$ becomes
\begin{equation}\label{E:AreaCord}
\mathbf x
=\sum_{j=i-1}^{i+1}\mathbf v_j
\frac
{
A\left(
\mathbf v_{j-1},\mathbf x,\mathbf v_{j+1}
\right)
}
{
A\left(
\mathbf v_{i-1},\mathbf v_{i},\mathbf v_{i+1}
\right)
}
\end{equation}
taking suffix addition modulo $3$ here. Expressing this affine combination in radial coordinates, and multiplying by the simplex area, we have
$$
\sum_{j=i-1}^{i+1}\mathbf r_j
A\left(
\mathbf v_{j-1},\mathbf x,\mathbf v_{j+1}
\right)
=\mathbf 0
$$
or
\begin{equation}\label{E:riBiAi-A}
\mathbf r_iB_i
=A_i\left(\mathbf r_i-\mathbf r_{i-1}\right)
-A_{i-1}\left(\mathbf r_{i-1}-\mathbf r_i\right)
\end{equation}
on setting
\begin{equation}\label{BiAvAi-1}
B_i=A\left(
\mathbf v_{i-1},\mathbf v_{i},\mathbf v_{i+1}
\right)
\
\mbox{ and }
\
A_{i-1}=A\left(
\mathbf x,\mathbf v_{i-1},\mathbf v_{i}
\right)
\end{equation}
for $1\le i\le n$. Note $B_i=A_{i-1}+A_i+A\left(\mathbf v_{i-1},\mathbf x,\mathbf v_{i+1}\right)$ from Figure~\ref{F:WxprsPlr}. (For the latter equation, it may help to envisage the case of the figure with $\mathbf x$ inside $\Delta_i$.)

Having completed the local consideration at the simplex $\Delta_i$ depicted in Figure~\ref{F:WxprsPlr}, we zoom out to consider the entire polygon $\Pi$ spanned by the vertices $\mathbf v_i$ for $1\le i\le n$. Now, addition of the suffices will be interpreted modulo $n$, so that $A_n=A\left(
\mathbf x,\mathbf v_{n},\mathbf v_{1}
\right)$, for example. The Wachspress weight \eqref{E:WxprsPlr} may be rewritten as
$
w_i(\mathbf x)=
{B_i}/\left({A_{i-1}A_i}\right)\,.
$
Division of the equation \eqref{E:riBiAi-A} by $A_{i-1}A_i$ yields
$$
\mathbf r_iw_i(\mathbf x)
=\frac1{A_{i-1}}\left(\mathbf r_i-\mathbf r_{i-1}\right)
-\frac1{A_{i}}\left(\mathbf r_{i-1}-\mathbf r_i\right) \,.
$$
Addition of all these equations for $1\le i\le n$ leads to
$
\sum_{i=1}^n\mathbf r_iw_i(\mathbf x)=\mathbf 0
$
or
\begin{equation*}\label{E:WaCnvCmb}
\mathbf x\sum_{i=1}^nw_i(\mathbf x)=\sum_{i=1}^n\mathbf v_iw_i(\mathbf x)\,,
\end{equation*}
from which \eqref{E:WaConCom} follows immediately.
\end{proof}

\begin{definition}\label{D:WxprsPlr}
At an interior point $\mathbf x$ of the polygon $\Pi$, the coordinates
\begin{equation}\label{E:WaWtWaCo}
w(\mathbf x,\mathbf v_i)=
\frac
{w_i(\mathbf x)}
{\sum_{j=1}^nw_j(\mathbf x)}
\end{equation}
for $1\le i\le n$ are known as the \emph{Wachspress coordinates} of $\mathbf x$.
\end{definition}

\begin{remark}\label{R:WxprsPlr}
Note that the definition \eqref{E:WaWtWaCo} of the Wachspress coordinates is invariant under uniform scalar multiplication of the Wachspress weights. For this reason, Wachspress weight specifications like \eqref{E:WxprsPlr} are often replaced by scalar multiples, as in \S\ref{SSS:WchspsBd} below.

The rescaling of Wachspress weights is analogous to the action of the gauge transformations of Definition~\ref{D:GaugeTrs} on Gibbs coordinates.
\end{remark}

\subsubsection{Wachspress coordinates as a bulk-boundary correspondence}\label{SSS:BulkBdry}

We have transformed
$$
\begin{matrix}
\boxed{
\mbox{ radial coordinates }
\mathbf r_i
\mbox{ of the }
\mathbf v_i
\mbox{ with respect to }
\mathbf x\
}\\
\mbox{ into }\rule[-3.5mm]{0mm}{10mm}\\
\boxed{
\mbox{ Wachspress coordinates }
w(\mathbf x,\mathbf v_i)
\mbox{ of }
\mathbf x
\mbox{ with respect to the }
\mathbf v_i\,.\
}
\end{matrix}
$$
This transformation may be seen as a ``bulk-boundary correspondence''. The first box refers the extreme points $\mathbf v_i$ on the boundary to a point $\mathbf x$ in the bulk, while the second refers the point $\mathbf x$ in the bulk to the extreme points $\mathbf v_i$ on the boundary.

\subsubsection{Wachspress weights, support functions, discrete curvature}\label{SSS:WaWeSuFu}

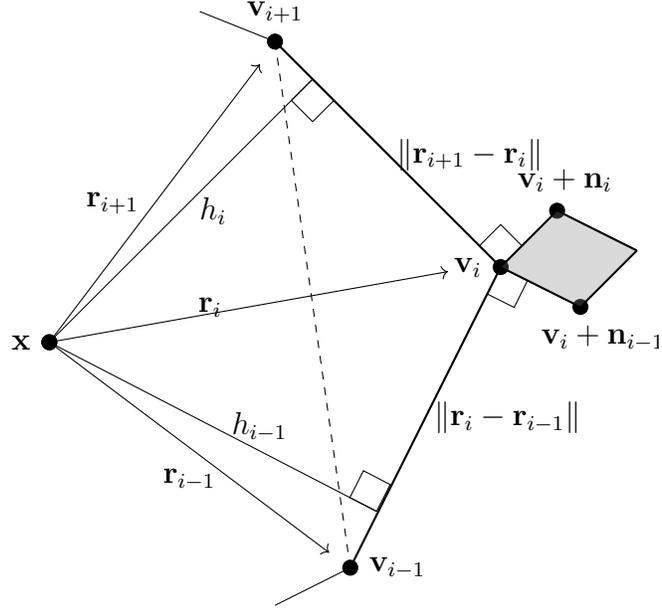
\begin{figure}[hbt]
  \centering
  \begin{tikzpicture}
    \coordinate (Origin)   at (0,0);
    \coordinate (XAxisMin) at (-5,0);
    \coordinate (XAxisMax) at (5,0);
    \coordinate (YAxisMin) at (0,-5);
    \coordinate (YAxisMax) at (0,5);
        \clip (-5,-5) rectangle (5.1cm,5.1cm);
\node[label=right:{$\mathbf v_{i-1}$}, draw, circle, inner sep=2pt, fill] at (1,-4) {};
\node[label=left:{$\mathbf v_{i}$}, draw, circle, inner sep=2pt, fill] at (3,0) {};
\node[label=above:{$\mathbf v_{i+1}$}, draw, circle, inner sep=2pt, fill] at (0,3) {};
\node[label=left:{$\mathbf x$},draw, circle, inner sep=2pt, fill] at (-3,-1){};
\node[draw, circle, inner sep=2pt, fill] at (-3,-1){};
\node[label=below:{$\rule{6mm}{0mm}\mathbf v_i+\mathbf n_{i-1}$},draw, circle, inner sep=2pt, fill] at (4.06,-0.53){};
\node[label=above:{$\rule{2mm}{0mm}\mathbf v_i+\mathbf n_{i}$},draw, circle, inner sep=2pt, fill] at (3.75,0.75){};
\draw[thick] (3,0) -- node[below=18pt, right=-1pt]
{$\|\mathbf{r}_{i}-\mathbf{r}_{i-1}\|$} (1,-4);
\draw[thick] (3,0) -- (3.75,0.75);
\draw[thick] (3,0) -- (4.06,-0.53);
\draw[thick] (3.75,0.75) -- (4.81,0.22);
\draw[thick] (4.06,-0.53) -- (4.81,0.22);
\filldraw[fill=gray, fill opacity=0.3, draw=black] (3,0) -- (3.75,0.75) --  (4.81,0.22) -- (4.06,-0.53) -- cycle;
\coordinate (V-1) at (1,-4) {};
\coordinate (V) at (3,0) {};
\coordinate (N-1) at (4.06,-0.53) {};
\coordinate (V+1) at (0,3) {};
\coordinate (N+1) at (3.75,0.75) {};
\coordinate (X) at (-3,-1) {};
\coordinate (X+1) at (0.5,2.5) {};
\coordinate (X-1) at (1.35,-3.25) {};
\tkzMarkRightAngle[size=.4](V-1,V,N-1);
\tkzMarkRightAngle[size=.4](V+1,V,N+1);
\tkzMarkRightAngle[size=.4](X,X+1,V);
\tkzMarkRightAngle[size=.4](X,X-1,V);
\draw[thin] (-3,-1) -- node[below=4pt,right=3pt]
{$h_i$} (0.5,2.5);
\draw[thin] (-3,-1) -- node[above=3pt,right=3pt]
{$h_{i-1}$} (1.35,-3.25);
\draw[thick] (0,3) -- node[above=23pt,right=-1pt]
{$\|\mathbf{r}_{i+1}-\mathbf{r}_{i}\|$} (3,0);
\draw[thin] (0,-4.5) -- (1, -4);
\draw[->] (-3,-1) -- node[below=4pt] {$\mathbf{r}_{i-1}$} (.72, -3.8);
\draw[->] (-3,-1) -- node[below=12pt,left=6pt] {$\mathbf{r}_{i}$} (2.3,-0.06);
\draw[->] (-3,-1) -- node[left=2pt] {$\mathbf{r}_{i+1}$} (-.2,2.7);
\draw[thin] (0,3) -- (-1,3.4);
\draw[dashed] (1,-4) -- (0,3);
\end{tikzpicture}
\caption{Planar Wachspress coordinates with curvature (cf. \cite[Figure~1]{WarSchHirDes}).}
\label{F:WxPlrCrv}
\end{figure}

Recall the notations
\begin{equation*}
B_i
=A\left(
\mathbf v_{i-1},\mathbf v_{i},\mathbf v_{i+1}
\right)
\
\mbox{ and }
\
A_{i-1}=A\left(
\mathbf x,\mathbf v_{i-1},\mathbf v_{i}
\right)
\end{equation*}
from \eqref{BiAvAi-1}. In terms of the lengths exhibited in Figure~\ref{F:WxPlrCrv}, we have
$$
A_i=\tfrac12 h_i\|\mathbf{r}_{i+1}-\mathbf{r}_{i}\| \,.
$$
In terms of the vectors exhibited in Figure~\ref{F:WxPlrCrv}, including the unit-length normal $\mathbf n_i$ to $\mathbf{r}_{i+1}-\mathbf{r}_{i}$ leading out of the polygon $\Pi$, we have
\begin{align*}
B_i
&
=
A\left(
\mathbf v_{i},\mathbf v_{i+1},\mathbf v_{i-1}
\right)
=
\tfrac12
\|(\mathbf{r}_{i+1}-\mathbf{r}_{i})\times(\mathbf{r}_{i-1}-\mathbf{r}_{i})\|
\\
&
=
\tfrac12
\|\mathbf{r}_{i+1}-\mathbf{r}_{i}\|
\cdot\|\mathbf{r}_{i}-\mathbf{r}_{i-1}\|
\cdot\|\mathbf n_i\times(-\mathbf n_{i-1})\|
\end{align*}
noting that $\mathbf n_i$ is rotated by $-\frac\pi2$ from $\mathbf{r}_{i+1}-\mathbf{r}_{i}$, and $-\mathbf n_{i-1}$ is rotated by $-\frac\pi2$ from $\mathbf{r}_{i-1}-\mathbf{r}_{i}$. We reformulate the Wachspress weight \eqref{E:WxprsPlr} as
\begin{align}\notag
w_i(\mathbf x)=
\frac{B_i}{A_{i-1}A_i}
&
=\frac
{2
\|\mathbf{r}_{i+1}-\mathbf{r}_{i}\|
\cdot\|\mathbf{r}_{i}-\mathbf{r}_{i-1}\|
\cdot\|\mathbf n_{i-1}\times\mathbf n_{i}\|
}
{
h_{i-1}\|\mathbf{r}_{i}-\mathbf{r}_{i-1}\|
\cdot
h_i\|\mathbf{r}_{i+1}-\mathbf{r}_{i}\|
}
\\ \label{E:wihihi-1}
&
=2\cdot
\frac
{
\det[\mathbf n_{i-1},\mathbf n_{i}]
}
{
h_{i-1}
h_i
}
\\ \label{E:CRVFRMwi}
&
=
\frac
{
2!\cdot
\det[\mathbf n_{i-1},\mathbf n_{i}]
}
{
\braket{\mathbf v_i-\mathbf x|\mathbf n_{i-1}}
\braket{\mathbf v_i-\mathbf x|\mathbf n_i}
}
\end{align}
writing $\braket{\mathbf v_i-\mathbf x|\mathbf n_{i-1}},\braket{\mathbf v_i-\mathbf x|\mathbf n_i}$ for the inner products of $\mathbf r_i$ with $\mathbf n_{i-1}$ and $\mathbf n_i$, the results $h_{i-1},h_i$ of evaluating the support functions of the convex polygon $\Pi$ in the $\mathbf n_{i-1}$- and $\mathbf n_i$-directions at $\mathbf x$. The determinant of the outgoing unit normals in the numerator of \eqref{E:CRVFRMwi}, which represents the area of the shaded parallelogram in Figure~\ref{F:WxPlrCrv}, may be regarded as a discrete measure of the curvature of the boundary of $\Pi$ at the vertex $\mathbf v_i$.

\subsubsection{Wachspress coordinates on polygon boundaries}\label{SSS:WchspsBd}

Wachspress weights $w_i(\mathbf x)$ given by \eqref{E:wihihi-1} may be rescaled by $h_1\cdot\ldots\cdot h_n$ to
\begin{equation}\label{E:rescalwi}
w_i'(\mathbf x)
=h_1\cdot\ldots\cdot h_{i-2}
\cdot\det[\mathbf n_{i-1},\mathbf n_{i}]
\cdot h_{i+1}\cdot\ldots\cdot h_n
\end{equation}
as noted in Remark~\ref{R:WxprsPlr}. Suppose that an edge $\braket{\mathbf v_{j-1},\mathbf v_j}$ is not incident with $\mathbf v_i$. This means that $i\ne j-1$ and $i\ne j$, so $j-1\notin\set{i-1,i}$. If the point $\mathbf x$ lies on the edge $\braket{\mathbf v_{j-1},\mathbf v_j}$, then $h_{j-1}=0$, and the expression \eqref{E:rescalwi} for $w_i'(\mathbf x)$ vanishes. In this case, the non-zero Wachspress coordinates for $\mathbf x$ reduce to its barycentric coordinates in terms of the generators $\mathbf v_{j-1}$ and $\mathbf v_j$ of the one-dimensional simplex represented by the edge $\braket{\mathbf v_{j-1},\mathbf v_j}$. In particular, we have $w(\mathbf v_j,\mathbf v_i)=\delta_{ji}$ with the Kronecker delta function (discrete Dirac delta function). Summarizing:

\begin{proposition}
Consider a polygon $\Pi$ with vertex set $V=\set{\mathbf v_1,\dots,\mathbf v_n}$. The Wachspress weights given by \eqref{E:WaWtWaCo} may be extended, using the formula
\begin{equation}\label{E:CoWWWaCo}
w(\mathbf x,\mathbf v_i)=
\frac
{w_i'(\mathbf x)}
{\sum_{j=1}^nw_j'(\mathbf x)}\,,
\end{equation}
to the entire polygon $\Pi$.
\end{proposition}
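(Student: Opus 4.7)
The plan is to verify two things: that the extended formula \eqref{E:CoWWWaCo} coincides with the original Wachspress coordinates \eqref{E:WaWtWaCo} in the interior of $\Pi$, and that the extended formula remains well-defined (nonzero denominator) on the boundary.

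For the interior, I would start from the formulation \eqref{E:wihihi-1} of the original weight, namely $w_i(\mathbf x)=2\det[\mathbf n_{i-1},\mathbf n_i]/(h_{i-1}h_i)$, and compare it with the rescaled weight \eqref{E:rescalwi}. When $\mathbf x$ is interior to $\Pi$, every support-function value $h_l=h_l(\mathbf x)$ is strictly positive, so we may multiply numerator and denominator of $w_i(\mathbf x)$ by the common factor $h_1\cdots h_n/2$; this yields precisely $w_i'(\mathbf x)$. Because the multiplier $h_1\cdots h_n/2$ does not depend on $i$, the invariance noted in Remark~\ref{R:WxprsPlr} guarantees that the normalized expression \eqref{E:CoWWWaCo} equals the normalized expression \eqref{E:WaWtWaCo}. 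Thus \eqref{E:CoWWWaCo} genuinely \emph{extends} the Wachspress coordinates.

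For the boundary analysis, I would treat the relative interior of an edge $\braket{\mathbf v_{j-1},\mathbf v_j}$ and the vertex case separately. On such an edge, $h_{j-1}(\mathbf x)=0$ while every other $h_l(\mathbf x)>0$. The product in \eqref{E:rescalwi} defining $w_i'(\mathbf x)$ omits precisely $h_{i-1}$ and $h_i$; therefore the zero factor $h_{j-1}$ survives inside the product for every $i\notin\{j-1,j\}$, forcing $w_i'(\mathbf x)=0$ (as already noted in the paragraph preceding the proposition). For $i\in\{j-1,j\}$, the factor $h_{j-1}$ is exactly the one that is omitted, and the remaining $h_l$'s are strictly positive; combined with the strict positivity of $\det[\mathbf n_{i-1},\mathbf n_i]$ at a vertex of a strictly convex polygon, this gives $w_i'(\mathbf x)>0$. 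The denominator $\sum_j w_j'(\mathbf x)$ is therefore positive, and \eqref{E:CoWWWaCo} is well-defined. The same reasoning at a vertex $\mathbf v_k$, where $h_{k-1}=h_k=0$, shows that only $w_k'(\mathbf v_k)>0$, so $w(\mathbf v_k,\mathbf v_i)=\delta_{ki}$.

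The main obstacle is essentially bookkeeping rather than a real difficulty: one must track which support-function factors are retained versus omitted in \eqref{E:rescalwi} as $\mathbf x$ crosses from the bulk onto an edge or vertex. The substantive input is the convexity hypothesis, which ensures $\det[\mathbf n_{i-1},\mathbf n_i]>0$ at each vertex (so no spurious cancellation occurs) and ensures that the only vanishing $h_l$ at $\mathbf x$ are those associated with edges or vertices actually containing $\mathbf x$. Once these two facts are in hand, the coincidence of \eqref{E:CoWWWaCo} with \eqref{E:WaWtWaCo} on the interior and its well-definedness on the boundary follow immediately, completing the extension.
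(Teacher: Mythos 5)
Your proposal is correct and follows essentially the same route as the paper, whose justification is exactly the discussion preceding the proposition: rescale by $h_1\cdots h_n$ (invoking the scaling invariance of Remark~\ref{R:WxprsPlr}), observe that on an edge $\braket{\mathbf v_{j-1},\mathbf v_j}$ the vanishing factor $h_{j-1}$ kills $w_i'$ precisely for the non-incident vertices, and conclude $w(\mathbf v_j,\mathbf v_i)=\delta_{ji}$ at vertices. Your version is if anything slightly more careful than the paper's, since you explicitly check that the surviving weights are strictly positive (so the denominator of \eqref{E:CoWWWaCo} never vanishes on the boundary), a point the paper leaves implicit.
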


\subsection{Wachspress coordinates for polytopes}\label{SS:WaCoFoPo}

\subsubsection{Wachspress coordinates for simple polytopes}\label{SSS:WaCoSiPo}

In \cite[\S2]{WarSchHirDes}, a method to extend Wachspress coordinates from polygons in the plane to simple polytopes in higher dimensions is presented. A $d$-dimensional polytope is said to be \emph{simple} (cf. \cite[p.8]{Ziegler}) if each vertex is incident to precisely $d$ edges. In dimension $2$, all polygons are simple.
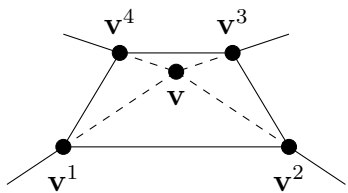
\begin{figure}[hbt]
  \centering
  \begin{tikzpicture}
    \coordinate (Origin)   at (0,0);
    \coordinate (XAxisMin) at (-.25,0);
    \coordinate (XAxisMax) at (5,0);
    \coordinate (YAxisMin) at (0,-.25);
    \coordinate (YAxisMax) at (0,2);
        \clip (-.5,-.5) rectangle (5.1cm,2.5cm);
\node[label=below:{$\mathbf v^1$}, draw, circle, inner sep=2pt, fill] at (.75,.5) {};
\node[label=below:{$\mathbf v^2$}, draw, circle, inner sep=2pt, fill] at (3.75,.5) {};
\node[label=above:{$\mathbf v^3$}, draw, circle, inner sep=2pt, fill] at (3,1.75) {};
\node[label=above:{$\mathbf v^4$}, draw, circle, inner sep=2pt, fill] at (1.5,1.75) {};
\node[label=below:{$\mathbf v$}, draw, circle, inner sep=2pt, fill] at (2.25,1.5) {};
\draw[thin] (0,0) -- (.75,.5);
\draw[thin] (4.5,0) -- (3.75,.5);
\draw[thin] (3.75,2) -- (3,1.75);
\draw[thin] (.75,2) -- (1.5,1.75);
\draw[thin] (3,1.75) -- (3.75,.5);
\draw[thin] (3,1.75) -- (1.5,1.75);
\draw[thin] (.75,.5) -- (1.5,1.75);
\draw[thin] (.75,.5) -- (3.75,.5);
\draw[dashed] (.75,.5) -- (2.25,1.5);
\draw[dashed] (3.75,.5) -- (2.25,1.5);
\draw[dashed] (3,1.75) -- (2.25,1.5);
\draw[dashed] (1.5,1.75) -- (2.25,1.5);
\end{tikzpicture}
\caption{Truncating a polytope to a simple polytope.}
\label{F:truncate}
\end{figure}
In the perspective view of part of a $3$-dimensional polytope in Figure~\ref{F:truncate}, inclusion of the vertex $\mathbf v$, incident with $4$ edges in dimension $3$, would violate the condition of simplicity. The violation could be removed by the illustrated process of \emph{truncation}, replacing the vertex $\mathbf v$ by the four new vertices $\mathbf v^1,\dots,\mathbf v^4$ as shown.

Consider a simple $d$-dimensional polytope $\Pi$ with vertices $\mathbf v_1,\dots\mathbf v_n$, and an interior point $\mathbf x$ of $\Pi$. The formula \eqref{E:CRVFRMwi} for the Wachspress weight of $\mathbf x$ with respect to the vertex $\mathbf v_i$ generalizes to
\begin{equation}\label{E:WaWtSiPo}
w_i(\mathbf x)
=
\frac
{
d!\cdot
\det[\mathbf n_{i_1},\dots,\mathbf n_{i_d}]
}
{
\braket{\mathbf v_i-\mathbf x|\mathbf n_{i_1}}
\dots
\braket{\mathbf v_i-\mathbf x|\mathbf n_{i_d}}
}
\end{equation}
(cf. \cite[(4)]{WarSchHirDes}). Here, the orientation of $\Pi$ is used to place a cyclic ordering $\mathbf n_{i_1},\dots,\mathbf n_{i_d}$ on the set of unit normals to the $d$ facets of $\Pi$ that are incident with $\mathbf v_i$.

\subsubsection{Wachspress coordinates for general polytopes}

Wachspress weights of an interior point of a polytope that is not simple may be defined in terms of the limit of the Wachspress weights, determined by \eqref{E:WaWtSiPo}, for simple truncations of $\Pi$ where the distances to eliminated non-simple points (e.g., the $\|\mathbf v-\mathbf v^{j}\|$ of Figure~\ref{F:truncate}) tend to zero. (See the discussion in \cite[\S2.2]{WarSchHirDes}.)

\subsubsection{Polytope boundaries}

Wachspress coordinates for polytopes extend to their boundaries, essentially along a generalization of the technique for polygons outlined in \S\ref{SSS:WchspsBd}.

\subsection{Continuous Wachspress coordinates}\label{SS:CoWachCo}

\subsubsection{Strictly convex sets in Euclidean space}

The Wachspress coordinates of an interior point of a polygon presented in \S\ref{SSS:WaWeSuFu}, together with their higher-dimensional version for polytopes in \S\ref{SS:WaCoFoPo}, have been developed to Wachspress coordinates of interior points $\mathbf x$ of strictly convex sets in \cite[\S3.1]{WarSchHirDes}. Recall that a convex subset of a $d$-dimensional Euclidean space is \emph{strictly convex} if it is compact (closed and bounded), and if each point $\mathbf v$ of the boundary lies in a single supporting hyperplane (cf. \cite[Ex.~3.3.24]{doCarmo}).  By analogy with \eqref{E:WaWtSiPo}, but removing the scalar factor $d!$ in the spirit of Remark~\ref{R:WxprsPlr}, the Wachspress weight of $\mathbf x$ with respect to a boundary point $\mathbf v$ is defined as
\begin{equation}\label{E:CntWxpWt}
w(\mathbf x,\mathbf v)=
\frac
{\kappa(\mathbf v)}
{\braket{\mathbf v-\mathbf x|\mathbf n_\mathbf v}^d}
\end{equation}
\cite[(6)]{WarSchHirDes}. Here, the numerator $\kappa(\mathbf v)$, the analogue of the determinant $\det[\mathbf n_{i_1},\dots,\mathbf n_{i_d}]$ in \eqref{E:WaWtSiPo}, is the Gaussian curvature of the boundary surface at the point $\mathbf v$.

Given the Wachspress weights \eqref{E:CntWxpWt}, the interior point $\mathbf x$ of the strictly convex set $\Sigma$ is then assigned
\begin{equation}\label{E:CntWxpCd}
\frac
{w(\mathbf x,\mathbf v)}
{\int_{\partial\Sigma}w(\mathbf x,\mathbf u)\mathrm d\mathbf u}
\end{equation}
as its continuous Wachspress coordinate with respect to the boundary point $\mathbf v$ \cite[(7)]{WarSchHirDes}. The expression
\begin{equation}\label{E:CntWxpEx}
\mathbf x=
\left(
\int_{\partial\Sigma}w(\mathbf x,\mathbf u)\mathrm d\mathbf u
\right)^{-1}
\int_{\partial\Sigma}
{w(\mathbf x,\mathbf v)}
\mathbf v
\mathrm d\mathbf v
\end{equation}
is obtained as the analogue of \eqref{E:WaConCom} \cite[\S3.2]{WarSchHirDes}.

\subsubsection{Curvature and determinants in dimension $2$}

\begin{figure}[hbt]
  \centering
  \begin{tikzpicture}
    \coordinate (Origin)   at (0,0);
    \coordinate (XAxisMin) at (0,0);
    \coordinate (XAxisMax) at (5,0);
    \coordinate (YAxisMin) at (0,0);
    \coordinate (YAxisMax) at (0,7);
        \clip (-1,0) rectangle (8cm,7cm);
\node[label=below:{$\mathbf v_i$}, draw, circle, inner sep=2pt, fill] at (2.5,4) {};
\node[label=above:{$\mathbf v_{i+1}$}, draw, circle, inner sep=2pt, fill] at (.25,3.25) {};
\node[label=above:{$\mathbf v_{i-1}$}, draw, circle, inner sep=2pt, fill] at (4.75,3.25) {};
\node[label=above:{$\theta_i$}] at (2.5,.75) {};
\coordinate (C) at (2.5,.25) {};
\coordinate (V+1) at (.25,3.25) {};
\coordinate (V) at (2.5,4) {};
\coordinate (M+1) at (1.375,3.625) {};
\coordinate (M-1) at (3.625,3.625) {};
\tkzMarkRightAngle[size=.3](V,M+1,C);
\tkzMarkRightAngle[size=.3](V,M-1,C);
\draw[thick,->] (2.5,4) -- node[below=2pt,right=1pt] {$\mathbf n_{i-1}$} (2.95,5.35);
\draw[thick,->] (2.5,4) -- node[below=1pt,left=.1pt] {$\mathbf n_{i}$} (2.05,5.35);
\draw[very thick] (3,5.5) arc (71.5651:108.4349:1.5811);
\node[label=above:{$\theta_i$}] at (2.5,4.5) {};
\node[label=above:
{
$
\begin{matrix}
\mbox{arc length }\lambda_i\\
\mbox{of piece }V_i\mbox{ of }\partial\Sigma
\end{matrix}
$
}
]
at (6,4) {};
\draw[->] (4.7,4.9) -- (3.4,4.1);
\node[label=above:
{
$
\begin{matrix}
\mbox{arc length }\theta_i\\
\mbox{on }S^1
\end{matrix}
$
}
]
at (0.2,5) {};
\draw[->] (1.5,6) -- (2.4,5.8);
\draw[dashed] (.25,3.25) -- (1.375,3.625);
\draw[thin] (1.375,3.625) -- (2.5,4);
\draw[dashed] (4.75,3.25) -- (3.625,3.625);
\draw[thin] (3.625,3.625) -- (2.5,4);
\draw[thin] (1.175,4.225) -- (2.5,.25);
\draw[thin] (3.75,4.015) -- (2.5,.25);
\draw (5.1517,2.9017) arc (45:135:3.75);
\draw[very thick] (3.6859,3.8076)  arc (71.5651:108.4349:3.75);
\end{tikzpicture}
\caption{Curvature and normals to inscribed polygon edges.}
\label{F:curvadet}
\end{figure}
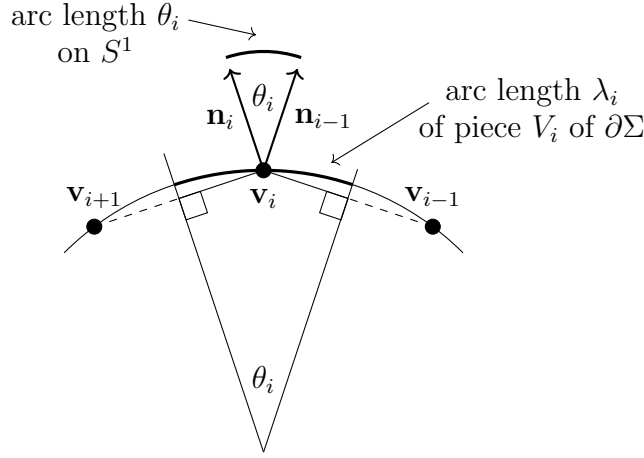
The analogy between curvature and determinants mentioned in connection with \eqref{E:CntWxpWt} may be observed most simply in dimension $2$, on the basis of Figure~\ref{F:curvadet}. On the boundary $\partial\Sigma$ of a strictly convex set $\Sigma$, consider a point $\mathbf v_i$ that is also a member of the large set $\set{\mathbf v_1,\dots,\mathbf v_n}$ of vertices of a polytope $\Pi$ that is inscribed in, and closely approximates, the convex set $\Sigma$. Suppose that the successive normals $\mathbf n_{i-1},\mathbf n_i$ to the edges of $\Pi$ that contain $\mathbf v_i$ subtend an angle $\theta_i$, as shown in Figure~\ref{F:curvadet}. Let $V_i$ denote the corresponding portion of the boundary $\partial\Sigma$, with arc length $\lambda_i$. In Figure~\ref{F:curvadet}, $S^1$ denotes the unit circle centered at $\mathbf v_i$.

In the discrete analysis of the Wachspress coordinates with respect to $\Pi$, involving functions $f$ defined on the vertex set $\set{\mathbf v_1,\dots,\mathbf v_n}$, functionals of the form
\begin{equation}\label{E:disumfnc}
\sum_{i=1}^nf(\mathbf v_i)\det[\mathbf n_{i-1},\mathbf n_{i}]
\end{equation}
appear. For example, we have $f(\mathbf v_i)$ as
\begin{equation}\label{E:discrfns}
\frac
{2!}
{\braket{\mathbf v_i-\mathbf x|\mathbf n_{i-1}}
\braket{\mathbf v_i-\mathbf x|\mathbf n_i}}
\quad
\mbox{or}
\quad
\frac
{2!\mathbf v_i}
{\braket{\mathbf v_i-\mathbf x|\mathbf n_{i-1}}
\braket{\mathbf v_i-\mathbf x|\mathbf n_i}}
\end{equation}
appearing in the version of \eqref{E:WaConCom} that uses Wachspress weights given by specializing the general formula \eqref{E:WaWtSiPo} to the case $d=2$.

Now consider an integrable function $f$ defined on $\partial\Sigma$. For the analogues of \eqref{E:discrfns}, we might have $f(\mathbf v)$ as $w(\mathbf x,\mathbf v)$ or $w(\mathbf x,\mathbf v)\mathbf v$ with the notation of \eqref{E:CntWxpWt}. We may then regard the functional \eqref{E:disumfnc} as a Riemann sum, determined by the partition $V_1,\dots,V_n$ of $\partial\Sigma$, to approximate the integral functional
\begin{equation}\label{E:curvfctl}
\int_{\partial\Sigma}f(\mathbf v)\kappa(\mathbf v)ds
\end{equation}
involving the arc length parameter $s$ for $\partial\Sigma$. Indeed, \eqref{E:curvfctl} is
\begin{align*}
\sum_{i=1}^n&\int_{V_i}f(\mathbf v)\kappa(\mathbf v)ds
\simeq\sum_{i=1}^n
\int_{V_i}f(\mathbf v_i)\frac{\theta_i}{\lambda_i}ds
\simeq\sum_{i=1}^n
\int_{V_i}f(\mathbf v_i)\frac{\sin\theta_i}{\lambda_i}ds
\\
&
=\sum_{i=1}^nf(\mathbf v_i)\sin\theta_i
\cdot\frac{1}{\lambda_i}\int_{V_i}1ds
=\sum_{i=1}^nf(\mathbf v_i)
\det[\mathbf n_{i-1},\mathbf n_{i}] \,.
\end{align*}
Here, the first approximation follows by \cite[p.167, Remark]{doCarmo}, for example. Intuitively, $V_i$ is almost an arc of angle $\theta_i$ and radius $r_i=\kappa(\mathbf v_i)^{-1}$, so $\lambda_i\simeq r_i\theta_i$.

\section{Comparing Wachspress and Gibbs coordinates}\label{S:WchsGibs}

\subsection{When Wachspress and Gibbs coordinates coincide}\label{SS:WaGiCncd}

\begin{proposition}\label{P:W=Gsmplx}
Consider a simplex as a free barycentric algebra $XB$ on its set $X$ of extreme points. Consider the insertion
\begin{equation}\label{E:W=Gsmplx}
j\colon X\hookrightarrow XB
\end{equation}
as a valuation function. Then on $XB$, Wachspress coordinates agree with the Gibbs coordinates for the valuation function \eqref{E:W=Gsmplx}.
\end{proposition}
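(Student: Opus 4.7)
The plan is to exploit the fact that a simplex, viewed as the free barycentric algebra $XB$ on its vertex set $X$, admits a \emph{unique} expression of each element as a convex combination of the generators. Once this uniqueness is in hand, any two coordinate systems that both furnish a valid barycentric decomposition must coincide pointwise.

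First I would invoke the freeness property recalled in \S\ref{SSS:FreeBAlg}: by the canonical form \eqref{E:deltasqs}--\eqref{E:deltasps}, each $\alpha \in XB$ admits a unique probability distribution $(p_x)_{x\in X}$ with $\alpha = \sum_{x\in X} p_x \cdot x^j$. I would then apply Theorem~\ref{T:Gibalpha}(a) to note that the Gibbs coordinates provide exactly such a representation, so that $(q^\beta(\alpha,x))_{x\in X}$ is forced by uniqueness to coincide with $(p_x)_{x\in X}$. As a byproduct, the supremum in the entropy \eqref{E:entalpha} is taken over a singleton set, and the entropy-maximization property of Theorem~\ref{T:Gibalpha}(c) holds trivially.

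The Wachspress side requires the analogous observation. A $d$-dimensional simplex is a simple polytope (each vertex meets exactly $d$ edges), so the formula \eqref{E:WaWtSiPo}, together with the normalization producing $w(\alpha,x)$ analogous to \eqref{E:CoWWWaCo}, applies and yields weights that by construction provide a barycentric decomposition $\alpha = \sum_{x\in X} w(\alpha,x) \cdot x^j$. Invoking uniqueness once more, we conclude that $w(\alpha,x) = q^\beta(\alpha,x)$ for each $x\in X$, which is the stated agreement. Boundary points are handled by the usual reduction to sub-simplex decompositions, as in \S\ref{SSS:WchspsBd}.

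The principal obstacle is verifying that the Wachspress formula \eqref{E:WaWtSiPo} on a simplex really delivers the linear precision property, rather than merely the partition of unity guaranteed by \eqref{E:CoWWWaCo}. In the triangle case this comes for free from Proposition~\ref{P:WxprsPlr}, whose proof already produces the convex-combination identity \eqref{E:WaConCom}. In higher dimensions the cleanest route is to check that the Wachspress weight at a simplex vertex is, up to a global scalar factor (immaterial by Remark~\ref{R:WxprsPlr}), the volumetric coordinate \eqref{E:volmtrpi}; this reduces to a determinantal identity relating the $d$ outward unit normals at a vertex of the simplex to the oriented hypervolume of the opposite facet.
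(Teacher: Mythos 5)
Your argument is correct and is essentially the intended one: the paper in fact states Proposition~\ref{P:W=Gsmplx} without proof, evidently regarding it as immediate from the uniqueness of barycentric coordinates on a simplex, which is exactly the lever you pull. Your additional care about linear precision of the Wachspress weights \eqref{E:WaWtSiPo} in dimension $d>2$ (reducing them, up to the rescaling of Remark~\ref{R:WxprsPlr}, to the volumetric coordinates \eqref{E:volmtrpi}) is a reasonable way to discharge the one step the paper leaves implicit.
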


\begin{definition}\label{D:semsmplx}
A polytope is said to be a \emph{semisimplex} if, as a barycentric algebra, it is a direct product of simplices (free barycentric algebras, as in \S\ref{SSS:fralasmp}).
\end{definition}

\begin{remark}
The only semisimplices in the plane are parallelograms and triangles. Recall from \S\ref{SSS:WaCoSiPo} that all polygons are simple, so that notion of simplicity is not related to the semisimplicity of Definition~\ref{D:semsmplx}.
\end{remark}

\begin{theorem}\label{T:aditivty}\cite[Th.~15.1]{Sm151}
For $i=1,2$, consider constituent systems represented by valuation functions $f_i:X_i\to A_i$, with respective entropy functions $H_i$. Let $f:X\to A$ be the valuation function
$$
f_1\times f_2:X_1\times X_2\to A_1\times A_2
$$
of the independent combination of the constituent systems, with entropy function $H$. Then
$$
H\big((\alpha_1,\alpha_2)\big)
= H_1(\alpha_1)+H_2(\alpha_2)
$$
for elements $\alpha_i$ of $A_i$.
\end{theorem}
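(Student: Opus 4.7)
The plan is to verify both inequalities $H\big((\alpha_1,\alpha_2)\big)\le H_1(\alpha_1)+H_2(\alpha_2)$ and $H\big((\alpha_1,\alpha_2)\big)\ge H_1(\alpha_1)+H_2(\alpha_2)$, working directly from the definition~\eqref{E:entalpha} of entropy, and exploiting the componentwise barycentric structure on products (\S\ref{SSS:ProdBAlg}) to move between joint distributions on $X_1\times X_2$ and their marginal distributions on each $X_i$.

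For the upper bound, I would begin with an arbitrary representation
$$
(\alpha_1,\alpha_2)=\sum_{(x_1,x_2)\in X_1\times X_2}p_{(x_1,x_2)}\,(x_1^{f_1},x_2^{f_2}).
$$
Applying the projections $\pi_i\colon A_1\times A_2\to A_i$ — which are barycentric homomorphisms by \S\ref{SSS:ProdBAlg} — shows that the marginals $p^{(i)}_{x_i}:=\sum_{x_{3-i}}p_{(x_1,x_2)}$ yield valid representations $\alpha_i=\sum_{x_i}p^{(i)}_{x_i}x_i^{f_i}$. The classical subadditivity of joint Shannon entropy then gives
$$
-\sum_{(x_1,x_2)}p_{(x_1,x_2)}\log p_{(x_1,x_2)}\;\le\;-\sum_{x_1}p^{(1)}_{x_1}\log p^{(1)}_{x_1}-\sum_{x_2}p^{(2)}_{x_2}\log p^{(2)}_{x_2}\;\le\;H_1(\alpha_1)+H_2(\alpha_2),
$$
and taking the supremum over all joint representations $p$ delivers the upper bound.

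For the reverse inequality, I would invoke Theorem~\ref{T:Gibalpha}(c) to obtain Gibbs distributions $q^{(i)}$ on $X_i$ that attain the suprema $H_i(\alpha_i)$. Forming the product distribution $q_{(x_1,x_2)}:=q^{(1)}_{x_1}q^{(2)}_{x_2}$ on $X_1\times X_2$, componentwise computation in $A_1\times A_2$ gives
$$
\sum_{(x_1,x_2)}q^{(1)}_{x_1}q^{(2)}_{x_2}(x_1^{f_1},x_2^{f_2})=\Bigl(\sum_{x_1}q^{(1)}_{x_1}x_1^{f_1},\sum_{x_2}q^{(2)}_{x_2}x_2^{f_2}\Bigr)=(\alpha_1,\alpha_2),
$$
so $q$ is a candidate distribution in the supremum defining $H\big((\alpha_1,\alpha_2)\big)$. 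Since $\log(q^{(1)}_{x_1}q^{(2)}_{x_2})=\log q^{(1)}_{x_1}+\log q^{(2)}_{x_2}$ and each marginal sums to $1$, a routine expansion shows that the entropy of $q$ equals exactly $H_1(\alpha_1)+H_2(\alpha_2)$, yielding the required lower bound.

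The hard part is really just conceptual bookkeeping: one must be careful that projections of joint barycentric combinations in $A_1\times A_2$ correspond precisely to marginalization of the coefficient distributions, and that independence in the probabilistic sense corresponds to the product construction in the barycentric setting. Once this identification is in hand, the two ingredients — subadditivity of joint entropy, and additivity of entropy on product distributions — are classical facts from information theory, and the existence of entropy-maximizing distributions is guaranteed by Theorem~\ref{T:Gibalpha}(c). No delicate limiting or measurability arguments are needed because $X_1$ and $X_2$ are finite.
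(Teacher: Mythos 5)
The paper does not prove this theorem itself; it is imported verbatim from \cite[Th.~15.1]{Sm151}, so there is no in-paper argument to compare against. Judged on its own terms, your proof is correct and complete. The upper bound is sound: the projections $\pi_i$ are barycentric homomorphisms, so pushing a joint representation of $(\alpha_1,\alpha_2)$ through $\pi_i$ and collecting the weights attached to each $x_i^{f_i}$ does produce the marginal as a valid representation of $\alpha_i$, and then subadditivity of Shannon entropy plus the definition of $H_i$ as a supremum finishes that direction. The lower bound is equally sound: the componentwise barycentric structure of $A_1\times A_2$ (\S\ref{SSS:ProdBAlg}) guarantees that the product distribution $q^{(1)}_{x_1}q^{(2)}_{x_2}$ represents $(\alpha_1,\alpha_2)$, and entropy is exactly additive on product distributions. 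Your appeal to Theorem~\ref{T:Gibalpha}(c) for attainment of the suprema $H_i(\alpha_i)$ is legitimate (and could even be weakened to near-maximizers followed by a limit, since the entropy of the product is the sum of the entropies of the factors for any pair of representations). The one point worth stating explicitly in a polished write-up is the marginalization step: in a general barycentric algebra the expression $\sum_x p_x x^f$ abbreviates an iterated binary combination, so one should note that a homomorphic image of such a combination with repeated summands collapses, via idempotence and skew-associativity, to the combination with the summed (marginal) weights — but this is exactly the bookkeeping you flag, and it goes through.
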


\begin{theorem}\label{T:WaGiSmSi}
Wachspress coordinates on semisimplices coincide with the Gibbs coordinates.
\end{theorem}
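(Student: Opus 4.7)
Write the semisimplex as $\Pi=\Pi_1\times\cdots\times\Pi_k$, with each factor $\Pi_j$ a simplex (a free barycentric algebra) of dimension $d_j$ on its set $V_j$ of extreme points, so that $V:=V_1\times\cdots\times V_k$ is the vertex set of $\Pi$, and $d:=d_1+\cdots+d_k$ is its dimension. For a point $\alpha=(\alpha_1,\ldots,\alpha_k)\in\Pi$, write $p_j(\alpha_j,v_j)\in I$ for the unique barycentric (volumetric) coordinates in each $\Pi_j$. The plan is to show independently that, for the insertion valuation $V\hookrightarrow\Pi$, both the Gibbs and Wachspress coordinates factor across the product,
\[
q^\beta(\alpha,(v_1,\ldots,v_k))=\prod_{j=1}^k p_j(\alpha_j,v_j)
=w(\alpha,(v_1,\ldots,v_k))\,,
\]
and to invoke Proposition~\ref{P:W=Gsmplx} on each simplex factor to identify the right-hand side with both the Gibbs and Wachspress coordinates on each $\Pi_j$.

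The Gibbs half is essentially Theorem~\ref{T:aditivty} packaged with the maximum-entropy characterization of Theorem~\ref{T:Gibalpha}(c). For any probability distribution $q$ on $V$ with $\sum_v q_v\cdot v=\alpha$ in $\Pi$, the marginal distributions $q^{(j)}$ on $V_j$ satisfy $\sum_{v_j}q^{(j)}_{v_j}\cdot v_j=\alpha_j$ in $\Pi_j$. Shannon subadditivity gives $H(q)\le\sum_j H(q^{(j)})\le\sum_j H(\alpha_j)=H(\alpha)$, the last equality being the additivity Theorem~\ref{T:aditivty}; equality throughout holds exactly when $q$ is the product of the Gibbs (max-entropy) distributions on the factors. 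On a simplex those Gibbs distributions are forced to coincide with the unique barycentric coordinates $p_j(\alpha_j,v_j)$, so the Gibbs coordinates on $\Pi$ factor as the product of the $p_j$.

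The Wachspress half is the geometric content of the theorem, and is the step I expect to require the most care. I would work with formula \eqref{E:WaWtSiPo} for the weight at a vertex $v=(v_1,\ldots,v_k)$ of the simple polytope $\Pi$. The facets of $\Pi$ are the sets $F\times\prod_{l\neq j}\Pi_l$ for $j\in\{1,\ldots,k\}$ and $F$ a facet of $\Pi_j$; their outward unit normals lie entirely in the $j$-th coordinate subspace, namely have the form $(0,\ldots,0,\mathbf n^{(j)}_F,0,\ldots,0)$. The facets of $\Pi$ incident with $v$ are precisely those indexed by facets $F$ of $\Pi_j$ incident with $v_j$, for each $j$. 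Because the $d$ normals at $v$ split into $k$ orthogonal blocks of sizes $d_1,\ldots,d_k$, the determinant in the numerator of \eqref{E:WaWtSiPo} is block-diagonal up to a reordering of columns, so it equals $\pm\prod_{j=1}^k\det_j[\mathbf n^{(j)}_\bullet]$, where $\det_j$ is the determinant computed in the $j$-th coordinate subspace. Simultaneously, the denominator of \eqref{E:WaWtSiPo} splits as $\prod_{j=1}^k\prod_{F\ni v_j}\langle v_j-\alpha_j\mid\mathbf n^{(j)}_F\rangle$, since each inner product depends only on the components in one block. Up to the $\alpha$-dependent but $v$-independent constant $\pm d!/\prod_j d_j!$, this produces
\[
w_v(\alpha)\;=\;C(\alpha)\,\prod_{j=1}^k w_{j,v_j}(\alpha_j)\,,
\]
and after the normalization \eqref{E:CoWWWaCo} (or \eqref{E:WaWtWaCo}) the constant $C(\alpha)$ cancels, giving $w(\alpha,v)=\prod_j w_j(\alpha_j,v_j)$. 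Two points will require attention: verifying that the sign ambiguity resolves uniformly in $v$ (it does, since all the determinants involved are signed by a consistent outward-orientation convention at each vertex), and that the argument extends to boundary points via the extension \eqref{E:CoWWWaCo} discussed in \S\ref{SSS:WchspsBd}, where it follows either by the same product computation on the corresponding boundary facet or by continuity from the interior.

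Combining the two halves and applying Proposition~\ref{P:W=Gsmplx} on each $\Pi_j$ to replace $w_j(\alpha_j,v_j)$ by $p_j(\alpha_j,v_j)$ yields the stated identification of Wachspress and Gibbs coordinates on the semisimplex $\Pi$.
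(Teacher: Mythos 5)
Your proposal is correct and follows essentially the same route as the paper: factor both coordinate systems over the direct product of simplices, identify them on each factor via Proposition~\ref{P:W=Gsmplx}, and use the entropy additivity of Theorem~\ref{T:aditivty} together with the uniqueness of the entropy maximizer in Theorem~\ref{T:Gibalpha}(c) to conclude. The only substantive difference is that the paper simply cites \cite{WarrenUBC} for the product formula for Wachspress coordinates on a product of simplices, whereas you derive it directly from the block structure of the facet normals in \eqref{E:WaWtSiPo}; your subadditivity argument for the Gibbs half is likewise a self-contained elaboration of the paper's direct appeal to additivity.
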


\begin{proof}
The Wachspress coordinates on a semisimplex are given as products of the Wachspress coordinates on its simplicial direct factors \cite{WarrenUBC}. (Note that Warren actually speaks of ``tensor products'' of simplices --- compare \S\ref{SSS:dipotepo}.) Now by Proposition~\ref{P:W=Gsmplx}, Wachspress coordinates maximize the entropy on simplices, since they coincide on a simplex with the entropy-maximizing Gibbs coordinates. By Theorem~\ref{T:aditivty}, Wachspress coordinates also maximize entropy on semisimplices. It follows that they coincide there with the entropy-maximizing Gibbs coordinates.
\end{proof}

\subsection{When Wachspress and Gibbs coordinates differ}\label{SS:WxNeqGbs}

In general, the Wachspress coordinates of a point in a convex polygon may differ from its Gibbs coordinates.

\begin{example}\label{X:WxNeqGbs}
Let $A$ be the convex hull (in $\mathbb{R}^2$) of the ordered sequence
$$
\mathbf v_1=(0,0)
=\tfrac23\mathbf v_2-\tfrac13\mathbf v_3+\tfrac23\mathbf v_4,
\mathbf v_2=(1,0),
\mathbf v_3=(0,1),
\mathbf v_4=(-1,\tfrac{1}{2})
$$
of extreme points. (Compare Figure~\ref{F:QrCrtGbs}.) Note that $A$ consists of an isosceles triangle based on the left of the $y$-axis, abutted to a right triangle based on the right of the $y$-axis. Thus
\begin{align*}
\mathbf b
&
=\tfrac16\left(\mathbf v_1+\mathbf v_3+\mathbf v_4\right)
+\tfrac16\left(\mathbf v_1+\mathbf v_2+\mathbf v_3\right)
\\
&
=\tfrac12\left(-\tfrac13,\tfrac12\right)
+\tfrac12\left(\tfrac14,\tfrac14\right)
=\left(0,\tfrac5{12}\right)
\simeq(0,0.417)
\end{align*}
is the barycenter of $A$.

Take $V=\set{\mathbf v_1,\mathbf v_2,\mathbf v_3,\mathbf v_4}$ with the insertion $j\colon V\hookrightarrow A$ as a valuation function. Corresponding to the barycentric homomorphism specified (to three decimal places) by
$$
\beta\colon A\to\mathbb R^\infty;
\mathbf v_2\,,
\mathbf v_4\mapsto\log 4+0.041\,,
\mathbf v_3\mapsto\log 4-0.172\,,
$$
the vector of Gibbs coordinates (to $3$ decimal places) for the barycenter $\mathbf b$ is obtained as $(0.223,0.240,0.297,0.240)$, while
$$
(0.186,0.265,0.284,0.265)=\tfrac1{264}(49,70,75,70)
$$
presents its Wachspress coordinates.

Again, we have a point
\begin{equation}\label{E:pointamx}
\mathbf a=\sum_{i=1}^4\tfrac14\mathbf v_i
=\left(0,\tfrac38\right)
=(0,0.375)
\end{equation}
realized from the Gibbs distribution for the potential $\beta_0\colon A\to\mathbb R^\infty$ with constant value $0$ (as in Proposition~\ref{P:GaGrOrCs}). However,
\begin{align*}
\tfrac1{4\times28}({25},{30},{27},{30})
&
=
(\tfrac14,\tfrac14,\tfrac14,\tfrac14)
+\tfrac1{112}(-3,2,-1,2)
\\
&
\simeq
(0.223,0.268,0.241,0.268)
\end{align*}
gives the Wachspress coordinate vector for $\mathbf a$. Here, we define
$$
\tfrac1{112}(3,-2,1)\simeq(0.027,-0.018,0.009)
$$
to be the (\emph{G$-$W}) \emph{discrepancy vector} of $\mathbf a$. [Read as ``G minus W''.] For the barycenter $\mathbf b$, the discrepancy vector is approximately $(0.037,-0.025,0.013)$.
\end{example}

\begin{figure}[thb]
\centering
\includegraphics[width=0.95\textwidth]{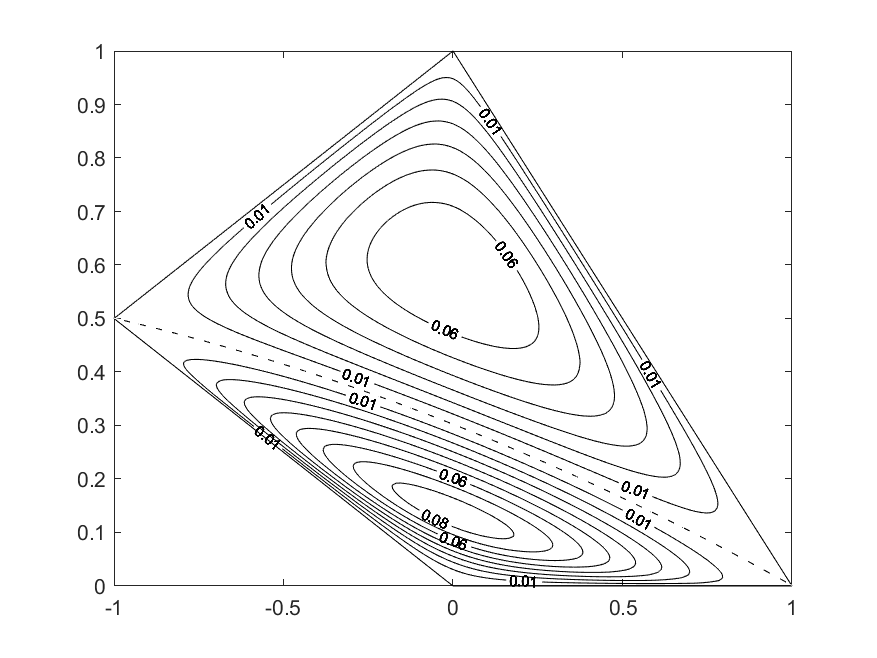}
\caption{The norm of the G$-$W discrepancy vector for the quadrilateral $A$ of \protect{Example~\ref{X:WxNeqGbs}}, displayed as a contour plot. The dotted line shows the \emph{equator} (compare \S\ref{SS:EquaEqua}) where the Gibbs and Wachspress coordinates agree; i.e., where the norm is zero.}
\label{F:QrCrtGbs}
\end{figure}

\subsection{The G$-$W discrepancy field}\label{SS:GWdiscrp}

Consider a polygon in the plane, with a vertex set of cardinality $n$. We introduce an $(n-1)$-dimensional vector field over the polygon to track the discrepancies between Gibbs coordinates and Wachspress coordinates.

\begin{definition}\label{D:GWdscrpFld}
Consider a polygon $\Pi$ presented as the convex hull of the counter-clockwise ordered sequence $\mathbf v_1,\dots,\mathbf v_n$ of extreme points around its boundary. For Gibbs coordinates, consider the insertion
$$
j\colon\set{\mathbf v_1,\dots,\mathbf v_n}\hookrightarrow\Pi;
\mathbf v_i\mapsto\mathbf v_i
$$
of the extreme points into the polygon as the valuation function. Then the (\emph{G$-$W}) \emph{discrepancy field} of $\Pi$ is the $(n-1)$-dimensional vector field over $\Pi$ whose $i$-th coordinate
\begin{equation}\label{E:GWdscrpFld}
q^\beta\left(\mathbf x,\mathbf v_i\right)
-
w(\mathbf x,\mathbf v_i)
\end{equation}
for $1\le i<n$, at a point $\mathbf x$ of $\Pi$, expresses the difference between the $i$-th Gibbs coordinate given by Definition~\ref{D:Gibalpha} and the corresponding Wachspress coordinate given by Definition~\ref{D:WxprsPlr}.
\end{definition}

\begin{proposition}\label{P:GWdscrpFld}
In the context of Definition~\ref{D:GWdscrpFld}, the G--W discrepancy vectors of $\Pi$ lie in a vector space of dimension $n-3$.
\end{proposition}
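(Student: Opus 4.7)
The plan is to exploit two fundamental properties shared by both Gibbs and Wachspress coordinates: each forms a partition of unity (in the sense of \S\ref{SSS:P1LinPre}), and each satisfies the linear precision property \eqref{E:brycmbkn}. For Gibbs coordinates the first follows because $q^\beta$ is a probability distribution on $\set{\mathbf v_1,\dots,\mathbf v_n}$, and the second is Theorem~\ref{T:Gibalpha}(a); for Wachspress coordinates, both properties are inherent in Proposition~\ref{P:WxprsPlr} together with \eqref{E:WaWtWaCo}. Writing $d_i(\mathbf x):=q^\beta(\mathbf x,\mathbf v_i)-w(\mathbf x,\mathbf v_i)$ for $1\le i\le n$, these two properties yield the three scalar linear constraints
\begin{equation*}
\sum_{i=1}^n d_i(\mathbf x)=0
\quad\text{and}\quad
\sum_{i=1}^n d_i(\mathbf x)\,\mathbf v_i=\mathbf 0\,.
\end{equation*}

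Next I would observe that the first (scalar) constraint is precisely what has already been incorporated into Definition~\ref{D:GWdscrpFld} by truncating to the $(n-1)$-dimensional discrepancy field: the missing component $d_n=-\sum_{i=1}^{n-1}d_i$ is automatically determined by $d_1,\dots,d_{n-1}$. Substituting this expression into the vector constraint gives
\begin{equation*}
\sum_{i=1}^{n-1} d_i(\mathbf x)\,(\mathbf v_i-\mathbf v_n)=\mathbf 0\,,
\end{equation*}
which amounts to two scalar equations in the unknowns $d_1(\mathbf x),\dots,d_{n-1}(\mathbf x)$. Each such equation cuts the ambient dimension of $\mathbb R^{n-1}$ down by one, provided the two are linearly independent, in which case the discrepancy vectors are confined to a subspace of dimension $(n-1)-2=n-3$.

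The one step that requires care is verifying that these two scalar equations really are linearly independent. This reduces to checking that the translated vertices $\mathbf v_1-\mathbf v_n,\dots,\mathbf v_{n-1}-\mathbf v_n$ span $\mathbb R^2$, which holds because $\Pi$ is a genuine (non-degenerate) planar polygon, so its vertex set is not collinear. Consequently at least two of these difference vectors are linearly independent, which forces the two scalar constraints to be linearly independent as well. I expect the main obstacle to be purely bookkeeping: keeping straight which constraint is already built into the $(n-1)$-coordinate truncation and which genuinely further reduce the dimension. Once that is sorted, the bound $n-3$ follows immediately.
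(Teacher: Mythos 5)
Your proposal is correct and follows essentially the same route as the paper: both derive the two scalar conditions $\sum_i d_i(\mathbf x)v_i^j=0$ from linear precision and conclude that they cut the $(n-1)$-dimensional space of Definition~\ref{D:GWdscrpFld} down to codimension $2$. If anything, you are slightly more careful than the paper, since you explicitly eliminate $d_n$ via the partition-of-unity relation and justify the independence of the two remaining constraints by the non-collinearity of the vertices, points the paper's proof leaves implicit.
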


\begin{proof}
In a Cartesian coordinatization of the plane, suppose that each vertex $\mathbf v_i$ has coordinates $(v_i^1,v_i^2)$, for $1\le i\le n$. Then the relations
$$
\sum_{i=1}^nq^\beta\left(\mathbf x,\mathbf v_i\right)\mathbf v_i
=
\mathbf x
=
\sum_{i=1}^nw(\mathbf x,\mathbf v_i)\mathbf v_i
$$
imply the two independent linear conditions
$$
\sum_{i=1}^n
\left(
q^\beta\left(\mathbf x,\mathbf v_i\right)
-w(\mathbf x,\mathbf v_i)
\right)v_i^j=0
$$
for $j=1,2$. These conditions determine a subspace of codimension $2$ in the $(n-1)$-dimensional space provided by Definition~\ref{D:GWdscrpFld}.
\end{proof}

\begin{corollary}\label{C:GWdscrpFld}
The (non-vanishing) $G-W$ discrepancy vectors of various points of a quadrilateral are parallel.
\end{corollary}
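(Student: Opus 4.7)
The plan is to read off the corollary as the $n=4$ specialization of Proposition~\ref{P:GWdscrpFld}. Setting $n=4$ in that proposition, the G--W discrepancy vectors of the quadrilateral, which a priori live in the $(n-1)=3$-dimensional ambient space provided by Definition~\ref{D:GWdscrpFld}, are confined to a linear subspace of dimension $n-3 = 1$.

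Once the discrepancy vectors are known to occupy a one-dimensional subspace of $\mathbb{R}^{3}$, the conclusion is forced: any two non-zero elements of a one-dimensional vector space are scalar multiples of one another, hence parallel. The only observation worth spelling out is that the subspace in question, being the solution set of the two homogeneous linear equations
$$
\sum_{i=1}^{4}\bigl(q^{\beta}(\mathbf{x},\mathbf{v}_i)-w(\mathbf{x},\mathbf{v}_i)\bigr)v_i^{j}=0
\quad (j=1,2)
$$
from the proof of Proposition~\ref{P:GWdscrpFld}, does not depend on the point $\mathbf{x}$; the quadrilateral's vertex coordinates $v_i^{j}$ are fixed once $\Pi$ is fixed. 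Thus the discrepancy vectors at different points of $\Pi$ all sit inside the same one-dimensional subspace, and so any two non-vanishing ones are parallel.

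I do not anticipate any obstacle here: the corollary is essentially a dimension count, and the non-trivial content (the two linear relations induced by $\mathbf{x}=\sum q^{\beta}\mathbf{v}_i=\sum w\,\mathbf{v}_i$) has already been extracted in Proposition~\ref{P:GWdscrpFld}. The only stylistic choice is whether to phrase the short argument as ``apply Proposition~\ref{P:GWdscrpFld} with $n=4$'' or to repeat the two linear conditions for emphasis; I would favour the former for brevity, perhaps noting parenthetically that the contour plot in Figure~\ref{F:QrCrtGbs} is consistent with this parallelism, since the norm levels trace curves transverse to a single direction of variation.
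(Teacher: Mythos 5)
Your proposal is correct and matches the paper's (implicit) argument: the corollary is stated as an immediate consequence of Proposition~\ref{P:GWdscrpFld} with $n=4$, which places all discrepancy vectors in a single one-dimensional subspace determined by the fixed vertex coordinates, so any two non-vanishing ones are parallel. Your explicit remark that the defining linear conditions do not depend on the point $\mathbf{x}$ is exactly the observation that makes the deduction legitimate.
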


\subsection{The equator equations}\label{SS:EquaEqua}

Within the context of the quadrilateral $A$ of Example~\ref{X:WxNeqGbs} and Figure~\ref{F:QrCrtGbs}, we consider the \emph{equator}: the curve of points connecting the vertices $(-1,0.5)$ and $(0,1)$ through the interior of $A$ where the Gibbs and Wachspress coordinates coincide.

\subsubsection{Gibbs coordinates}\label{E:equaGibs}

The Gibbs coordinates of an interior point of the quadrilateral are given by a barycentric homomorphism
\begin{align}\notag
&\beta\colon A\to\mathbb R^\infty;
\\ \label{E:beta4equ}
&
\begin{cases}
\mathbf v_1\mapsto\log\sqrt[3]{x^2y^{-1}z^2}\,,
\\
\mathbf v_2\mapsto\log x\,,
\\
\mathbf v_3\mapsto\log y,
\\
\mathbf v_4\mapsto\log z
\end{cases}
\end{align}
for positive real numbers $x,y,z$ that are to be regarded as unknowns. Note
$$
\mathbf v_1\beta
=\tfrac23\mathbf v_2\beta-\tfrac13\mathbf v_3\beta+\tfrac23\mathbf v_4\beta,
$$
so that \eqref{E:beta4equ} indeed specifies a barycentric algebra homomorphism. Write the partition function $Z(\beta)$ as the algebraic function
$$
Z=Z(x,y,z)=x+y+z+\sqrt[3]{x^2y^{-1}z^2}
$$
of the positive real variables $x,y,z$. Then with $w=\sqrt[3]{x^2y^{-1}z^2}$, we have
\begin{equation}\label{E:GibsEqEq}
\left(\frac{\sqrt[3]{x^2y^{-1}z^2}}Z,\frac xZ,\frac yZ,\frac zZ\right)
=
\left(\frac wZ,\frac xZ,\frac yZ,\frac zZ\right)
\end{equation}
as the vector of Gibbs coordinates of the point determined by \eqref{E:beta4equ}, while
\begin{align}\notag
&\frac{\sqrt[3]{x^2y^{-1}z^2}}Z(0,0)
+\frac xZ(1,0)
+\frac yZ(0,1)
+\frac zZ(-1,\tfrac12)
=
\\ \label{E:CartEqEq}
&
\rule{10mm}{0mm}
\mathbf x
:=\left(\frac{2x-2z}{2Z},\frac{2y+z}{2Z}\right)
\end{align}
gives the Cartesian coordinates of that point.

\subsubsection{Wachspress coordinates}

We determine the Wachspress coordinates of the Cartesian point \eqref{E:CartEqEq} using Proposition~\ref{P:WxprsPlr}. Recalling \eqref{E:Aov0v1v2}, and with $\mathbf v_i=(v_{i,1},v_{i,2})$, then after a little manipulation for the penultimate equation we obtain
\renewcommand{\arraystretch}{1.5}
\begin{align*}
A(\mathbf x,&\mathbf v_i,\mathbf v_{i+1})
=\tfrac12\det[\mathbf v_i-\mathbf  x,\mathbf v_{i+1}-\mathbf  x]
\\
&
=
\frac12
\begin{vmatrix}
v_{i,1}-\frac{2x-2z}{2Z} &v_{i,2}-\frac{2y+z}{2Z}\\
v_{{i+1},1}-\frac{2x-2z}{2Z} &v_{{i+1},2}-\frac{2y+z}{2Z}
\end{vmatrix}
\\
&
=\frac1{4Z^2}
\big\{
(Zv_{i,1}-x+z)(2Zv_{{i+1},2}-2y-x)
\\
&
\rule{15mm}{0mm}
-(Zv_{{i+1},1}-x+z)(2Zv_{i,2}-2y-z)
\big\}
\end{align*}
for $1\le i\le 4$, with $\mathbf v_{i+1}$ interpreted as $\mathbf v_1$ for $i=4$. We then have the following expressions:
\begin{align}\label{E:s1Whsprs}
s_1:=4ZA(\mathbf x,\mathbf v_1,\mathbf v_2)&=(x+2y)\,;\\ \label{E:s2Whsprs}
s_2:=4ZA(\mathbf x,\mathbf v_2,\mathbf v_3)&=(2Z-3x-2y+2z)\,;\\ \label{E:s3Whsprs}
s_3:=4ZA(\mathbf x,\mathbf v_3,\mathbf v_4)&=(2Z-2y-z)\,;\mbox{ and}\\ \label{E:s4Whsprs}
s_4:=4ZA(\mathbf x,\mathbf v_4,\mathbf v_1)&=(2x+2y-z)\,.
\end{align}
By \eqref{E:WxprsPlr}, the Wachspress weights become:
\begin{align*}
w_1(\mathbf x)
&
=
\frac
{
A\left(
\mathbf v_{4},\mathbf v_{1},\mathbf v_{2}
\right)
}
{
A\left(
\mathbf x,\mathbf v_{4},\mathbf v_{1}
\right)
\cdot
A\left(
\mathbf x,\mathbf v_{1},\mathbf v_{2}
\right)
}
=
\frac
{
4Z^2
}
{
(2x+2y-z)(x+2y)
}\,;
\\
w_2(\mathbf x)
&
=
\frac
{
A\left(
\mathbf v_{1},\mathbf v_{2},\mathbf v_{3}
\right)
}
{
A\left(
\mathbf x,\mathbf v_{1},\mathbf v_{2}
\right)
\cdot
A\left(
\mathbf x,\mathbf v_{2},\mathbf v_{3}
\right)
}
=
\frac
{
8Z^2
}
{
(x+2y)(2Z-3x-2y+2z)
}\,;
\\
w_3(\mathbf x)
&
=
\frac
{
A\left(
\mathbf v_{2},\mathbf v_{3},\mathbf v_{4}
\right)
}
{
A\left(
\mathbf x,\mathbf v_{2},\mathbf v_{3}
\right)
\cdot
A\left(
\mathbf x,\mathbf v_{3},\mathbf v_{4}
\right)
}
=
\frac
{
12Z^2
}
{
(2Z-3x-2y+2z)(2Z-2y-z)
}\,;
\\
w_4(\mathbf x)
&
=
\frac
{
A\left(
\mathbf v_{3},\mathbf v_{4},\mathbf v_{1}
\right)
}
{
A\left(
\mathbf x,\mathbf v_{3},\mathbf v_{4}
\right)
\cdot
A\left(
\mathbf x,\mathbf v_{4},\mathbf v_{1}
\right)
}
=
\frac
{
8Z^2
}
{
(2Z-2y-z)(2x+2y-z)
}\,.
\end{align*}
The inverted term in \eqref{E:WaConCom} is
\begin{align*}
w_1(\mathbf x)&+w_2(\mathbf x)+w_3(\mathbf x)+w_4(\mathbf x)
\\
&
=4Z^2
\left(
\frac1{s_4s_1}
+
\frac2{s_1s_2}
+
\frac3{s_2s_3}
+
\frac2{s_3s_4}
\right)\\
&
=
4Z^2
\left(
\frac
{s_2s_3+2s_3s_4+3s_4s_1+2s_1s_2}
{s_1s_2s_3s_4}
\right)\,.
\end{align*}
Set
$$
S=s_2s_3+2s_3s_4+3s_4s_1+2s_1s_2\,.
$$
Then
\begin{equation}\label{E:WhspEqEq}
\left(
\frac{s_2s_3}S,\frac{2s_3s_4}S,\frac{3s_4s_1}S,\frac{2s_1s_2}S
\right)
\end{equation}
describes the vector of Wachspress coordinates of the interior point $\mathbf x$ in terms of the expressions \eqref{E:s1Whsprs}--\eqref{E:s4Whsprs}.
\renewcommand{\arraystretch}{1}

\subsubsection{Equator equations}\label{SSS:EquaEqua}

Comparing the last three coordinates of \eqref{E:GibsEqEq} against the last three coordinates of \eqref{E:WhspEqEq}, we obtain the sets
\begin{equation*}
\frac{2s_3s_4}S=\frac xZ\,,
\quad
\frac{3s_4s_1}S=\frac yZ\,,
\quad
\frac{2s_1s_2}S=\frac zZ
\end{equation*}
or
\begin{align}\label{E:EquaEqua}
2s_3s_4Z=xS\,,
\quad
3s_4s_1Z=yS\,,
\quad
2s_1s_2Z=zS
\end{align}
of three equations which hold on the equator.

In general, the set of points where Gibbs and Wachspress coordinates agree is given as the solution set of a collection of equations. This solution set may comprise the entire polygon, as treated in \S\ref{SS:WaGiCncd}. However, in the  case discussed here, the solution set in the interior of the quadrilateral forms an equator line whose equation is determined explicitly in Theorem~\ref{T:EqtrEqun} below.

\subsubsection{The equator}

The Cartesian coordinates of a point of $A$ were given as
\begin{equation}\label{E:abfromG}
(a,b)=\left(\frac{x-z}{Z},\frac{2y+z}{2Z}\right)
\end{equation}
by \eqref{E:CartEqEq} in terms of the \emph{Gibbs variables}
$$
x,y,z,Z=x+y+z+\sqrt[3]{x^2y^{-1}z^2}\,.
$$
Similarly, we may compute
\begin{equation}\label{E:abfromW}
(a,b)=\left(\frac{2(uv-st)}{S},\frac{s(t+3v)}{S}\right)
\end{equation}
in terms of the \emph{Wachspress variables}
$$
s=s_1,t=s_2,u=s_3,v=s_4,S=s_2s_3+2s_3s_4+3s_4s_1+2s_1s_2\,.
$$

Consider points $(a,b)$ of the quadrilateral $A$, as in \eqref{E:abfromG} and\eqref{E:abfromW}. We know that the Gibbs and Wachspress coordinates of $(a,b)$ agree at $\mathbf v_1=(0,0)$ and $\mathbf v_3=(0,1)$. Otherwise, we have the following parametric equation of the equator from $\mathbf v_4=(-1,1/2)$ to $\mathbf v_2=(1,0)$.

\begin{theorem}\label{T:EqtrEqun}
The point $(a,b)$ lies on the equator if
\begin{equation}\label{E:EqtrEqun}
b=
\frac
{
-12-13a+
\sqrt{25^2+11\cdot13(1-a^2)}
}
{4\cdot13}
\end{equation}
with $-1\le a\le 1$.
\end{theorem}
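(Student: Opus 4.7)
My plan is to reduce the equator condition to a single polynomial identity among the $s_i$, then convert that identity into a quadratic equation in $b$ and apply the quadratic formula.

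The starting point is the system \eqref{E:EquaEqua} augmented by the first-coordinate identity $wS = Z s_2 s_3$ obtained by matching \eqref{E:GibsEqEq} against \eqref{E:WhspEqEq}. Substituting the four resulting expressions
\[
w = Z s_2 s_3/S,\quad x = 2 Z s_3 s_4/S,\quad y = 3 Z s_4 s_1/S,\quad z = 2 Z s_1 s_2/S
\]
into the defining constraint $w^3 y = x^2 z^2$ (equivalent to $w = \sqrt[3]{x^2 y^{-1} z^2}$, the only nontrivial relation among the Gibbs variables in \eqref{E:beta4equ}), one gets $3\,Z^4 s_1 s_2^3 s_3^3 s_4 = 16\,Z^4 s_1^2 s_2^2 s_3^2 s_4^2$ after multiplying through by $S^4$. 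Cancelling the monomial $Z^4 s_1 s_2^2 s_3^2 s_4$, which is strictly positive in the interior, collapses the equator condition to the single polynomial relation $3 s_2 s_3 = 16 s_1 s_4$. Since $s_i = 4 Z A(\mathbf x,\mathbf v_i,\mathbf v_{i+1})$, this rescales to $3 A_2 A_3 = 16 A_1 A_4$ for the signed areas $A_i := A(\mathbf x, \mathbf v_i, \mathbf v_{i+1})$.

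Next I would compute each $A_i$ as a polynomial in $(a,b)$ by substituting the given vertices into \eqref{E:Aov0v1v2}; a routine expansion yields
\[
A_1 = \tfrac{b}{2},\quad A_2 = \tfrac{1-a-b}{2},\quad A_3 = \tfrac{2+a-2b}{4},\quad A_4 = \tfrac{a+2b}{4}.
\]
Inserting these into $3A_2A_3 = 16 A_1 A_4$ and clearing the common factor $\tfrac18$ gives $3(1-a-b)(2+a-2b) = 16 b(a+2b)$, and gathering powers of $b$ reduces this to
\[
26\,b^2 + (13a + 12)\,b + (3a^2 + 3a - 6) = 0.
\]
The quadratic formula then delivers $b$ with discriminant
\[
(13a+12)^2 - 4\cdot 26\cdot (3a^2 + 3a - 6) = 768 - 143\,a^2 = 25^2 + 11\cdot 13\,(1-a^2),
\]
matching the radicand in \eqref{E:EqtrEqun}. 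The correct sign is forced by endpoint tests: at $a=1$ the $+$ branch gives $b = (-25+25)/52 = 0$, matching $\mathbf v_2 = (1,0)$; at $a=-1$ it gives $b = (-(-1)+25)/52 = 1/2$, matching $\mathbf v_4 = (-1,\tfrac12)$. The $-$ branch fails both tests.

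The main obstacle is justifying that the single identity $3 s_2 s_3 = 16 s_1 s_4$ is not merely necessary but also sufficient for Gibbs--Wachspress coincidence at $(a,b)$. Necessity is the substitution above; for sufficiency, the cleanest conceptual route is to observe that a distribution $(p_1,\dots,p_4)$ on $\{\mathbf v_1,\dots,\mathbf v_4\}$ is of Gibbs form $p_i = e^{-\mathbf v_i\beta}/Z(\beta)$ for some affine $\beta$ iff $(\log p_i)_{i=1}^4$ lies in the image of the restriction map sending affine functions on the plane to $\mathbb R^4$. This image is the three-dimensional orthogonal complement of the affine dependence vector $(3,-2,1,-2)$ recorded in Example~\ref{X:WxNeqGbs}, so membership is the single scalar relation $3\log p_1 - 2\log p_2 + \log p_3 - 2\log p_4 = 0$, i.e., $p_1^3 p_3 = p_2^2 p_4^2$. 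Inserting the Wachspress formulas \eqref{E:WhspEqEq} returns exactly $3 s_2 s_3 = 16 s_1 s_4$, and Theorem~\ref{T:Gibalpha}(c) then identifies the resulting Gibbs-form Wachspress distribution with the unique Gibbs distribution.
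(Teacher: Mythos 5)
Your argument is correct and lands on the same formula, but it takes a genuinely different route through the middle of the computation. The paper stays in the Gibbs variables: comparing \eqref{E:GibsEqEq} with \eqref{E:WhspEqEq} it extracts the product relation $3xz=4wy$, combines this with $w=\sqrt[3]{x^2y^{-1}z^2}$ to get $w=16y/9$, and then eliminates $x$, $z$ and $Z$ via the substitution $\xi=z/y$, solving a quadratic in $\xi$ before a fairly long conversion back to $b$. You instead push the single equator constraint onto the Wachspress side, collapsing it to the polynomial relation $3s_2s_3=16s_1s_4$, i.e.\ $3A_2A_3=16A_1A_4$ in the signed areas; since each $A_i$ is affine in $(a,b)$, this is immediately the quadratic $26b^2+(13a+12)b+(3a^2+3a-6)=0$, whose discriminant $768-143a^2$ and positive root reproduce \eqref{E:EqtrEqun}, with the endpoint tests selecting the branch. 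I verified your area computations, the expansion, and the discriminant; all are correct. Your route is shorter, and it also repairs a logical asymmetry in the paper's proof: the theorem asserts sufficiency (``lies on the equator \emph{if}''), while the paper's chain of implications runs from the equator hypothesis to the formula, i.e.\ in the necessity direction. Your characterization of Gibbs-form distributions on the four vertices as those with $3\log p_1-2\log p_2+\log p_3-2\log p_4=0$ (orthogonality to the affine dependence $(3,-2,1,-2)$), followed by Theorem~\ref{T:Gibalpha}(c), turns the single relation $3s_2s_3=16s_1s_4$ into a genuine equivalence with the equator condition and so delivers the stated direction. The only residual gap, which you share with the paper, is that neither argument explicitly checks that the solution curve lies in the closed quadrilateral so that the interior-positivity hypotheses (all $s_i>0$, all $p_i>0$) actually hold along it; a one-line verification that $A_1,A_2,A_3,A_4\ge0$ on the curve would close this.
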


\begin{proof}
Setting $a=\pm1$ in \eqref{E:EqtrEqun} returns the vertices $\mathbf v_4=(-1,1/2)$ and $\mathbf v_2=(1,0)$ as equator points $(a,b)$. For the remainder of the proof, we restrict to the case where $-1<a<1$, considering interior points of the quadrilateral. This implies that $x,y,z$ and $w=\sqrt[3]{x^2y^{-1}z^2}$ are positive real numbers.

Comparing \eqref{E:GibsEqEq} with \eqref{E:WhspEqEq} yields
\begin{align*}
\frac{3xz}{Z^2}
=\frac{12s_1s_2s_3s_4}{S^2}
=\frac{4wy}{Z^2}
\end{align*}
or
\begin{equation}\label{E:Equa_imp}
3xz=4wy\,,
\end{equation}
whence $x^2z^2/y=16w^2y/9$ and
\begin{align*}
\sqrt[3]{w^2\cdot w}=w=\sqrt[3]{x^2y^{-1}z^2}=\sqrt[3]{\frac{16}{9}w^2y}\,,
\end{align*}
implying that $w=16y/9$. Taking $x=aZ+z$ from the first component of \eqref{E:abfromG}, and
$$
Z=x+z+y+w=aZ+2z+25y/9\,,
$$
we have
\begin{align}\label{E:ZandxzaZ}
Z&=\frac{2z+25y/9}{1-a}
\quad
\mbox{and}
\quad
x=z+aZ
=\frac{z(1+a)+25ay/9}{1-a}\,.
\end{align}
The equation
\begin{equation*}
\frac{3(z^2(1+a)+25azy/9)}{1-a}=\frac{64}{9}y^2
\end{equation*}
is then obtained from \eqref{E:Equa_imp}. Dividing throughout by $y^2$, and setting $\xi=\dfrac{z}{y}$, we have
\begin{equation*}
3\cdot\frac{1+a}{1-a}\cdot\xi^2+\frac{25a}{3\cdot(1-a)}\cdot\xi-\frac{64}{9}=0.
\end{equation*}
The unknown $\xi=z/y$ satisfies
$$
\xi=\frac{-25a+\sqrt{768-143a^2}}{18(1+a)}\,,
$$
as the positive solution of the quadratic equation.
The second component of \eqref{E:abfromG} now yields
\begin{align*}
b&=\frac{y+z/2}{Z}
\overset{\eqref{E:ZandxzaZ}}{=}
\frac{y+\xi y/2}{y(2\xi+25/9)/(1-a)}=\frac{9(1-a)(1+\xi/2)}{18\xi+25}\\
&=\frac14(1-a)
\frac
{36+11a+\sqrt{768-143a^2}}
{25+\sqrt{768-143a^2}}
\\
&=
\frac14(1-a)
\frac
{36+11a+\sqrt{768-143a^2}}
{25+\sqrt{768-143a^2}}
\cdot
\frac
{25-\sqrt{768-143a^2}}
{25-\sqrt{768-143a^2}}
\\
&=
\frac
{22(1+a)\sqrt{768-143a^2}-264-550a-286a^2}
{1144(1+a)}
\\
&=
\frac
{\sqrt{768-143a^2}}
{52}
-
\frac
{12+25a+13a^2}
{52(1+a)}
=
\frac
{\sqrt{768-143a^2}}
{52}
-
\frac
{12+13a}
{52}
\\
&=
\frac
{
-12-13a+
\sqrt{25^2+11\cdot13(1-a^2)}
}
{4\cdot13}
\end{align*}
as required.
\end{proof}

\section{Conclusion and future work}\label{S:ConcFuWk}

\subsection{Concluding remarks}

We have applied the techniques of barycentric algebra to the problem of determining barycentric coordinates for elements of a polytope. Theorem~\ref{T:WaGiSmSi} identifies a class of polytopes on which the Gibbs and Wachspress coordinates agree. The G--W discrepancy vector is introduced as a tool for comparing Gibbs and Wachspress coordinates on a given polygon.

\subsubsection{Continuous Wachspress coordinates}

The standard approach to these coordinates was surveyed briefly in \S\ref{SS:CoWachCo}. It may prove fruitful to adopt an alternative approach based on barycentric algebras, making use of the modal theory in \cite[Ch.~3]{RS85}. This theory formalizes the approximation of a convex set by the polytopes it contains, or more generally realizing an arbitrary barycentric algebra as a colimit of its finitely generated subalgebras.

\subsubsection{Threshold convexity}

We have not addressed numerical issues in this paper. One such issue is the important distinction between zero and non-zero coordinates. In dealing with this issue, it may be helpful to consider \emph{threshold convexity}, a version of barycentric algebra effectively defaulting to zero or one for coordinates in weighted means whose closeness to one of the endpoints of the unit interval lies within the level of numerical noise \cite{Sm184}.

\end{document}